\documentclass[12pt]{amsart}

\usepackage{latexsym}
\usepackage{amsmath,amssymb}
\usepackage{amscd}

\topmargin=0cm
\oddsidemargin=0cm
\evensidemargin=0cm
\textheight=23cm
\textwidth=16cm

\newtheorem{thm}{Theorem}[section]

\newtheorem{cor}[thm]{Corollary}
\newtheorem{prop}[thm]{Proposition}
\newtheorem{lem}[thm]{Lemma}
\newtheorem{ques}[thm]{Question}

\theoremstyle{definition}

\theoremstyle{remark}

\newcommand{\cl}[1]{\operatorname{Cl}_{#1}}
\newcommand{\naibu}[1]{\operatorname{Int}_{#1}}

\newcommand{\fr}[1]{\operatorname{Fr}_{#1}}

\newcommand{\diam}{\operatorname{diam}}

\newcommand{\sdim}{\operatorname{dim}^{\infty}}
\newcommand{\sind}{\operatorname{ind}}
\newcommand{\lind}{\operatorname{Ind}}

\newcommand{\pois}{\hspace{-2pt}\smallsetminus\hspace{-2pt}}

\newcommand{\mapright}[1]{%
\smash{\mathop{%
\hbox to 1cm{\rightarrowfill}}\limits^{#1}}}
\newcommand{\mapleft}[1]{%
\smash{\mathop{%
\hbox to 1cm{\leftarrowfill}}\limits^{#1}}}

\usepackage[dvips]{graphicx,color}

\hyphenation{%
        ho-me-o-mor-phism ho-me-o-mor-phisms 
        com-pac-ti-fi-ca-tion com-pac-ti-fi-ca-tions 
        com-pac-ti-fy com-pac-ti-fies 
        con-tinu-ous con-tinu-ously 
        con-ti-nu-ity 
        sepa-rable 
}

\begin{document}

\title[Dimension of the Smirnov remainder]
{Large inductive dimension of 
the Smirnov remainder}

\author[Y. Akaike et al]
{Yuji Akaike, Naotsugu Chinen and Kazuo Tomoyasu}

\address{Kure National College of Technology,
2-2-11 Aga-Minami Kure-shi Hiroshima 737- 
\indent 8506, Japan}
 
\email{akaike@kure-nct.ac.jp}

\address{Okinawa National College of Technology,
905 Henoko Nago-shi Okinawa 905-2192, 
\indent Japan}
 
\email{chinen@okinawa-ct.ac.jp}

\address{Miyakonojo National College of Technology,
473-1 Yoshio-cho Miyakonojo-shi 
\indent Miyazaki 885-8567, Japan}

\email{tomoyasu@cc.miyakonojo-nct.ac.jp}

%\cch

\thanks{An earlier version of this paper was presented at 
S\=urikaisekikenky\=usho in Kyoto University, October 11, 2005.}

\keywords{Large inductive dimension; Smirnov compactification}

\subjclass[2000]{Primary 54D35, 54D40; Secondary 54F45}

\begin{abstract}
The purpose of this paper is to investigate 
the large inductive dimension 
of the remainder of the Smirnov
compactification of $\mathbb R^n$ with the usual metric,
and give an application of it.
\end{abstract}
 
\maketitle

\section{Introduction}

\medskip

We follow the notation and terminology 
of \cite{engelking} and \cite{engelking2}.
We say that two compactifications 
$\alpha X$ and $\gamma X$ of a space $X$ 
are {\it equivalent} provided that
there exists a homeomorphism $f:\alpha X\to\gamma X$ such that
$f|_X$ is the identity map on $X$, and we denote this by
writing $\alpha X\approx\gamma X$. 
As usual, $X\cong Y$ means that $X$ is homeomorphic to $Y$.
Let $Y$ be a subspace of a metric space $(X,d)$.
We denote by $d|_Y$ the subspace metric on $Y$ induced by $d$.
A metric $d$ on $X$ is said to be {\it proper}
if for every $r>0$, $\cl{X}B_r(x,d)$ is compact,
where $B_r(x,d)=\{y\in X:d(x,y)<r\}$.
We use $\mathbb R$, $\mathbb Q$, $\mathbb Z$ and $\mathbb N$ 
for the reals, the rationals, the integers, and the natural numbers.
We write $\mathbb J$ and $\mathbb I$
for $[0,\infty)$ and $[0,1]$. \par

By $C^{\ast }(X)$, we denote the Banach algebra of
all bounded real-valued continuous functions on 
a space $X$ with the sup-norm.
It is well-known that there is a one-to-one correspondence between
the compactifications of a space $X$ 
and the closed subrings of $C^{\ast }(X)$
containing the constants and generating the topology of $X$.
Let $U^{\ast }_d (X)$ be the set of all bounded uniformly 
continuous functions of a metric space $(X,d)$.
Then we note that $U^{\ast }_d (X)$ is a closed subring of $C^{\ast }(X)$.
The {\it Smirnov compactification} $u_d X$ of a  metric space $(X,d)$
is the unique compactification associated with the closed subring 
$U^{\ast }_d(X)$ of $C^{\ast }(X)$ and is metric-dependent. 
Now, we recall the construction of the Smirnov compactifications:
Let $e:X\to \prod_{f\in U_d^{\ast}(X)}I_f$ be the evaluation map
associated with $U_d^{\ast}(X)$,
where $I_f=[{\rm inf}f(X), {\rm sup}f(X)]\subset \Bbb R$.
Recall that, identifying $X$ with $e(X)$,
the closure $\cl{}(e(X))$ in $\prod_{f\in U_d^{\ast}(X)}I_f$
and the Smirnov compactification $u_d X$ are equivalent.
See \cite{kn} and \cite{woods} for more details.
Here, recall the following fact concerning 
the Smirnov compactifications.

\medskip

\begin{prop}\cite[Theorem 2.5]{woods}\label{prop:separation}
Let $X$ be a noncompact metric space with a metric $d$.
Then the following conditions are equivalent$:$
\begin{enumerate}
\item[$(1)$] 
A compactification $\alpha X$ of $X$ is equivalent to $u_d X$,
\item[$(2)$] 
For disjoint closed sets $A, B\subset X$, 
$\cl{\alpha X}A\cap\cl{\alpha X}B=\emptyset$ 
if and only if $d(A,B)>0$.
\end{enumerate}
\end{prop}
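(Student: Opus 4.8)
\medskip

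\noindent\textbf{Proof proposal.}
The plan is to reduce the whole statement to the classical bijection between compactifications of $X$ and closed subrings of $C^{\ast}(X)$ containing the constants and generating the topology: for a compactification $\alpha X$ let $C_\alpha(X)=\{f|_X:f\in C(\alpha X)\}$ be the associated subring, so that ``$\alpha X\approx u_d X$'' means exactly ``$C_\alpha(X)=U^{\ast}_d(X)$''. The only external input I need is the standard extendability criterion: $f\in C^{\ast}(X)$ extends continuously over $\alpha X$ if and only if for all reals $a<b$ the sets $\{x:f(x)\le a\}$ and $\{x:f(x)\ge b\}$ have disjoint closures in $\alpha X$ (equivalently, $\lim_{x\to p}f(x)$ exists for each $p\in\alpha X$). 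Granting it, the proof is a dictionary between ``$d(A,B)>0$'' and disjointness of such closures, with uniform continuity as the translator; and recall that, by the very construction of $u_d X$ in the excerpt, every $f\in U^{\ast}_d(X)$ already extends over $u_d X$ via the coordinate projection.

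For $(1)\Rightarrow(2)$, I would assume $\alpha X\approx u_d X$ via a homeomorphism $h$ fixing $X$. If $d(A,B)=\delta>0$, take $f=\max\{0,\,1-d(\cdot,A)/\delta\}$, which is $\tfrac1\delta$-Lipschitz, hence in $U^{\ast}_d(X)$, and equals $1$ on $A$ and $0$ on $B$; its continuous extension $\bar f$ over $u_d X$ puts $\cl{u_d X}A\subseteq\bar f^{-1}(1)$ and $\cl{u_d X}B\subseteq\bar f^{-1}(0)$, and applying $h$ (which fixes $A\cup B$) gives $\cl{\alpha X}A\cap\cl{\alpha X}B=\emptyset$. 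Conversely, if $d(A,B)=0$, pick $a_n\in A$, $b_n\in B$ with $d(a_n,b_n)\to 0$; if $\cl{\alpha X}A\cap\cl{\alpha X}B=\emptyset$, these disjoint compacta could be separated by Urysohn's lemma by some $g\in C(\alpha X)$, and $g|_X\in U^{\ast}_d(X)$ would be uniformly continuous while $|g(a_n)-g(b_n)|=1$ for all $n$ --- a contradiction. This establishes condition $(2)$.

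For $(2)\Rightarrow(1)$, I would assume $(2)$ and prove $C_\alpha(X)=U^{\ast}_d(X)$. For $\subseteq$: if $f\in C(\alpha X)$ had $f|_X$ not uniformly continuous, I would choose $\varepsilon>0$ and $a_n,b_n\in X$ with $d(a_n,b_n)\to 0$ and $f(a_n)-f(b_n)\ge\varepsilon$; passing to a subsequence so $f(a_n)$, $f(b_n)$ converge and fixing a real $c$ strictly between the limits, the disjoint closed sets $A=(f|_X)^{-1}[c+\tfrac{\varepsilon}{4},\infty)$ and $B=(f|_X)^{-1}(-\infty,c-\tfrac{\varepsilon}{4}]$ satisfy $d(A,B)=0$, so $(2)$ would force $\cl{\alpha X}A\cap\cl{\alpha X}B\neq\emptyset$; but $A,B$ lie in the disjoint sets $f^{-1}[c+\tfrac{\varepsilon}{4},\infty)$ and $f^{-1}(-\infty,c-\tfrac{\varepsilon}{4}]$, which are closed in $\alpha X$ --- contradiction. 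For $\supseteq$: given $f\in U^{\ast}_d(X)$ and $a<b$, uniform continuity gives $\delta>0$ with $d(x,y)<\delta\Rightarrow|f(x)-f(y)|<b-a$, so $d(\{f\le a\},\{f\ge b\})\ge\delta>0$, and by $(2)$ these closed sets have disjoint closures in $\alpha X$; the extension criterion then yields $f\in C_\alpha(X)$. Hence $C_\alpha(X)=U^{\ast}_d(X)$ and $\alpha X\approx u_d X$.

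The step I expect to be the real obstacle is the extension criterion, since it is the one nontrivial imported fact; it is nonetheless classical, provable either from $C_\alpha(X)$ being a uniformly closed sublattice of $C^{\ast}(X)$ containing the constants, or directly by noting that the disjoint-closures condition is exactly what makes $\bar f(p):=\lim_{x\to p}f(x)$ well defined at every $p\in\alpha X$, after which one checks $\bar f$ is continuous. Everything else is the bookkeeping above: uniform continuity spreads sublevel and superlevel sets apart in $d$, and $d(A,B)=0$ lets nearby sequences contradict uniform continuity.
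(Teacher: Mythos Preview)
Your argument is correct. The paper itself does not supply a proof of this proposition; it is quoted verbatim as \cite[Theorem~2.5]{woods} and used as a black box throughout. So there is no ``paper's own proof'' to compare against beyond the citation.

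That said, your route is essentially the standard one and matches what one finds in Woods' paper: identify compactifications with their function rings, so that $\alpha X\approx u_dX$ becomes $C_\alpha(X)=U^{\ast}_d(X)$, and then translate condition~(2) into the Ta\u{\i}manov-type extension criterion for bounded real-valued maps. Both inclusions and both implications are handled cleanly. The only imported fact, as you flag, is the extension criterion ``$f$ extends over $\alpha X$ iff $\{f\le a\}$ and $\{f\ge b\}$ have disjoint closures in $\alpha X$ for all $a<b$''; your sketch of why it holds (the disjoint-closures condition forces $\lim_{x\to p}f(x)$ to exist at every $p$, and the resulting $\bar f$ is then seen to be continuous) is the right idea and can be made rigorous in a few lines. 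One cosmetic remark: in the $(2)\Rightarrow(1)$, $\subseteq$ step, when you write $f(a_n)-f(b_n)\ge\varepsilon$ you are implicitly relabeling so that the difference has a fixed sign; this is harmless but worth saying explicitly.
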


\medskip

The aim of this paper is to investigate 
the large inductive dimension of 
the remainder of the Smirnov compactification. 
The following result is well-known as Smirnov's theorem
(See \cite{smirnov} or \cite[p.256]{kn}):
$\dim u_d X\pois X=\sdim (X,d)$ holds
for each noncompact metric space $(X,d)$,
where $\sdim (X,d)$ is the boundary dimension of $(X,d)$.
Notice that Smirnov's theorem does not explain
the large inductive dimension of $u_dX\pois X$.
As usual, if $d$ is proper, then $u_dX\pois X$ contains
a copy of $\mathbb N^*$ that is 
the Stone-{\v C}ech remainder of $\mathbb N$.
Thus, we don't know whether $\dim u_dX\pois X=\lind u_dX\pois X$
or $\dim u_dX\pois X<\lind u_dX\pois X$. 
In section 2, we calculate the large inductive dimension 
$\lind u_{d_n}\mathbb R^n \pois \mathbb R^n$
of the remainder of the Smirnov compactification of $\mathbb R^n$
with the usual metric $d_n$, and
we show that $\lind u_{d_n} \mathbb R^n \pois \mathbb R^n = n$ 
for each $n\in\mathbb N$.
Furthermore, for any noncompact
locally compact connected separable metrizable space $X$,
we show that for each $n\in\mathbb N$,
there exists a compatible proper
metric $d_n$ on $X$ such that 
$\dim u_{d_n}X\pois X=\sind u_{d_n}X\pois X=
\lind u_{d_n}X\pois X=n$.
In section 3, for any noncompact
locally compact separable metrizable space $X$, 
we show that there exists a totally bounded 
metric $d_n$ on $X$ such that 
$\dim u_{d_n}X\pois X=\sind u_{d_n}X\pois X=
\lind u_{d_n}X\pois X=n$ for each $n\in\mathbb Z$ with $n\ge 0$, 
and give an approximation to the Stone-{\v C}ech compactification.

\bigskip

\section{The Smirnov remainder generated by a proper metric}

\bigskip

Let $X$ be a topological space and $A,B$ a pair of
disjoint subsets of $X$. We say that a closed
set $L\subset X$
is a {\it partition} 
between $A$ and $B$ if there exist open sets
$U,~V\subset X$ such that 
$A\subset U,~B\subset V,~U\cap V=\emptyset$, and 
$X\pois L=U\cup V$.
Recall that a normal space $X$ satisfies the inequality
$\lind X\le n(\ge0)$ if and only if for every pair
$A,B$ of disjoint closed subsets of $X$ there exists a 
partition $L$ between $A$ and $B$ such that $\lind L\le n-1$.\par

Let $(X,d)$ and $(Y,\rho)$ be metric spaces.
A bijection $f:X\to Y$ is called a 
{\it uniform isomorphism} if both $f$ and $f^{-1}$ are
uniformly continuous. In this case we say that 
the metric spaces $(X,d)$ and $(Y,\rho)$ are
{\it uniformly equivalent}. \par

\bigskip

\begin{lem}\label{lem:subspace}
Let $(X,d)$ and $(Y,\rho)$ be noncompact metric spaces and
$f : Y \rightarrow X$ a uniform closed embedding 
$($i.e., $f : Y \rightarrow f(Y)$ is a uniform isomorphism 
and $f(Y)$ is closed$)$.
Then the following statements hold$:$
\begin{enumerate}
\item[(1)] $u_{\rho}Y \pois Y$ is embedded in $u_{d}X \pois X$.
\item[(2)] Let $(Y_1,d_1)$ be a compact metric space and
$(Y_2,d_2)$ a noncompact metric space.
If $Y$ is uniformly equivalent to $(Y_1 \times Y_2,d_1+d_2)$, 
then $Y_1$ is embedded in $u_{d}X \pois X$. 
\end{enumerate}
\end{lem}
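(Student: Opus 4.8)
The plan is to deduce (1) directly from Proposition~\ref{prop:separation} by identifying the closure of $f(Y)$ in $u_dX$ with the Smirnov compactification of $f(Y)$, and then to obtain (2) from (1) by computing the Smirnov compactification of a product one of whose factors is compact.

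For (1) I would first record the extension principle: a uniformly continuous map $g\colon(Z,\sigma)\to(W,\tau)$ of metric spaces extends (uniquely) to a continuous map $u_{\sigma}Z\to u_{\tau}W$, because $h\circ g\in U^{\ast}_{\sigma}(Z)$ for every $h\in U^{\ast}_{\tau}(W)$ and one invokes the correspondence between compactifications and subrings of $C^{\ast}$ (equivalently, the description of $u_{\sigma}Z$ as a closure in a product of intervals). Applying this to $f$ and to $f^{-1}\colon f(Y)\to Y$ shows $u_{\rho}Y$ is equivalent to $u_{d|_{f(Y)}}f(Y)$. Writing $Z=f(Y)$, a closed noncompact subset of $X$, it then suffices to prove $\cl{u_dX}Z\approx u_{d|_Z}Z$; granting this, since $Z$ is closed in $X$ we have $\cl{u_dX}Z\cap X=Z$, so $u_{\rho}Y\pois Y\cong u_{d|_Z}Z\pois Z\cong\cl{u_dX}Z\pois Z=\cl{u_dX}Z\pois X\subset u_dX\pois X$. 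To get $\cl{u_dX}Z\approx u_{d|_Z}Z$, apply Proposition~\ref{prop:separation} to $(Z,d|_Z)$: for $A,B\subset Z$ disjoint and closed in $Z$, they are disjoint closed subsets of $X$ (as $Z$ is closed in $X$), their closures in the subspace $\cl{u_dX}Z$ coincide with their closures in $u_dX$, and $(d|_Z)(A,B)=d(A,B)$; hence the separation property characterizing $u_dX$ passes verbatim to $\cl{u_dX}Z$.

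For (2), the uniform equivalence $Y\approx(Y_1\times Y_2,d_1+d_2)$ and the extension principle give $u_{\rho}Y\pois Y\cong u_{d_1+d_2}(Y_1\times Y_2)\pois(Y_1\times Y_2)$, so by (1) it is enough to embed $Y_1$ into the latter. I claim $u_{d_1+d_2}(Y_1\times Y_2)\approx Y_1\times u_{d_2}Y_2$; with this, picking $\xi\in u_{d_2}Y_2\pois Y_2$ (nonempty as $Y_2$ is noncompact), the set $Y_1\times\{\xi\}\cong Y_1$ lies in $(Y_1\times u_{d_2}Y_2)\pois(Y_1\times Y_2)$, and we are done. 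Since $Y_1$ is compact, $Y_1\times u_{d_2}Y_2$ is a compactification of $Y_1\times Y_2$; the coordinate projections are nonexpanding for $d_1+d_2$, hence uniformly continuous, and $u_{d_1}Y_1=Y_1$, so the extension principle yields a continuous surjection $u_{d_1+d_2}(Y_1\times Y_2)\to Y_1\times u_{d_2}Y_2$ fixing $Y_1\times Y_2$. Thus the claim amounts to the reverse: every $g\in U^{\ast}_{d_1+d_2}(Y_1\times Y_2)$ extends continuously over $Y_1\times u_{d_2}Y_2$. For fixed $y_1$ the section $g(y_1,\cdot)$ lies in $U^{\ast}_{d_2}(Y_2)$, hence extends to $\bar g_{y_1}\colon u_{d_2}Y_2\to\mathbb R$; set $\tilde g(y_1,\xi)=\bar g_{y_1}(\xi)$. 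Given $\varepsilon>0$, choose $\delta>0$ with $|g(p)-g(q)|<\varepsilon/3$ whenever $(d_1+d_2)(p,q)<\delta$; then $d_1(y_1,y_1')<\delta$ forces $|g(y_1,y_2)-g(y_1',y_2)|<\varepsilon/3$ for all $y_2$, hence $|\bar g_{y_1}(\xi)-\bar g_{y_1'}(\xi)|\le\varepsilon/3$ for all $\xi$ by density, and combining with continuity of the single function $\bar g_{y_1^0}$ at a point $\xi^0$ gives joint continuity of $\tilde g$ at $(y_1^0,\xi^0)$.

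I expect the product computation in (2) to be the main obstacle: one must verify that the fibrewise Smirnov extensions $\bar g_{y_1}$ glue to a jointly continuous function on $Y_1\times u_{d_2}Y_2$. Compactness of $Y_1$ (so no compactification is needed in the first coordinate and $Y_1\times u_{d_2}Y_2$ is compact), together with the $y_1$-uniform modulus of continuity coming from $g\in U^{\ast}_{d_1+d_2}(Y_1\times Y_2)$, is precisely what makes the gluing work; the remainder of the argument is a routine application of Proposition~\ref{prop:separation} and the extension principle.
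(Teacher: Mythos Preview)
Your proof is correct and follows the same logical skeleton as the paper's: for (1), reduce to showing that the closure of a closed set $Z$ in $u_dX$ is the Smirnov compactification of $(Z,d|_Z)$ and that uniform isomorphisms induce equivalent Smirnov compactifications; for (2), use the product formula $u_{d_1+d_2}(Y_1\times Y_2)\approx Y_1\times u_{d_2}Y_2$ when $Y_1$ is compact. The only difference is presentational: the paper dispatches all three ingredients by citing \cite[Theorems 2.9, 2.10 and 3.6]{woods}, whereas you reprove them from Proposition~\ref{prop:separation} and the extension principle. Your direct arguments are sound---in particular the $\varepsilon/3$ gluing for joint continuity of $\tilde g$ works exactly because uniform continuity of $g$ gives a modulus in $y_1$ that is uniform in $y_2$, which survives passage to the closure $u_{d_2}Y_2$---so what you have written is essentially an unpacking of the Woods references the authors invoke.
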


\begin{proof}
(1) It follows immediately by 
\cite[Theorem 2.9 and 2.10]{woods}.\par

(2) It follows from (1) that 
$u_{\rho}Y \pois Y$ is embedded in $u_{d}X \pois X$.
Since $Y_1$ is a compact metric space 
and $Y$ is uniformly equivalent to $Y_1 \times Y_2$, 
by \cite[Theorem 2.10 and 3.6]{woods},
$u_{\rho} Y\cong Y_1 \times u_{d_2}Y_2$.
Thus, $u_{\rho} Y \pois Y\cong Y_1 \times (u_{d_2}Y_2 \pois Y_2)$,
and then $Y_1$ is embedded in $u_{d}X \pois X$.
\end{proof}

\bigskip

Let $n,m\in\mathbb N$ and $k \in \mathbb Z$ with $n \geq k \geq 0$.
Denote subspaces $Z_{k}^n$ and $Z_{k}^{n,m}$ 
of $\mathbb R^n$ as follows:
$$Z_{k}^n=\{(x_1,x_2,\dots,x_n) \in \mathbb R^n : 
| \{i : x_i \in \mathbb Z \}| \geq n-k \}\mbox{ and}$$
$$Z_{k}^{n,m}=\{(x_1,x_2,\dots,x_n) \in \mathbb R^n : 
| \{i : mx_i \in \mathbb Z \}| \geq n-k \}\mbox{.}$$

\bigskip

\begin{lem} For each $n,m\in\mathbb N$ and 
each $k\in\mathbb Z$
with $n \geq k \geq 0$, we have the following facts$:$
\begin{enumerate}
\item[(1)] $Z_k^n$ is $k$-dimensional.
\item[(2)] $Z_k^n$ and $Z_k^{n,m}$ are uniformly equivalent.
\end{enumerate}
\end{lem}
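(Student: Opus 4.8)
The plan is to prove (1) and (2) separately, starting with the dimension computation. For (1), I would describe $Z_k^n$ as a countable union of closed $k$-cells. Indeed, for each choice of a subset $S\subseteq\{1,\dots,n\}$ with $|S|\le k$ (the set of coordinates allowed to be non-integral) and each choice of integers $(m_i)_{i\notin S}$, the set
\[
\{(x_1,\dots,x_n): x_i=m_i \text{ for } i\notin S\}
\]
is a copy of $\mathbb R^{|S|}$, hence of dimension $|S|\le k$; and $Z_k^n$ is exactly the union of all these subspaces over all such $S$ and all integer tuples. This is a countable family of closed subsets of $Z_k^n$, so by the countable sum theorem for covering dimension, $\dim Z_k^n\le k$. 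For the reverse inequality, note that taking $S=\{1,\dots,k\}$ and all $m_i=0$ exhibits $\mathbb R^k$ as a (closed) subspace of $Z_k^n$, so $\dim Z_k^n\ge\dim\mathbb R^k=k$. Hence $\dim Z_k^n=k$.

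For (2), the plan is to write down an explicit bijection $h:\mathbb R^n\to\mathbb R^n$ carrying $Z_k^n$ onto $Z_k^{n,m}$ and show that both $h$ and $h^{-1}$, restricted appropriately, are uniformly continuous. The natural candidate is the coordinatewise scaling $h(x_1,\dots,x_n)=(x_1/m,\dots,x_n/m)$: since $x_i\in\mathbb Z$ iff $m(x_i/m)\in\mathbb Z$, this maps the condition $|\{i:x_i\in\mathbb Z\}|\ge n-k$ exactly onto the condition $|\{i:my_i\in\mathbb Z\}|\ge n-k$, so $h(Z_k^n)=Z_k^{n,m}$. Both $h$ and $h^{-1}$ are linear maps on $\mathbb R^n$ (scaling by $1/m$ and by $m$), hence Lipschitz, hence uniformly continuous on all of $\mathbb R^n$; their restrictions to $Z_k^n$ and $Z_k^{n,m}$ (with the subspace metrics inherited from the usual metric $d_n$ on $\mathbb R^n$) remain Lipschitz, so $h|_{Z_k^n}:Z_k^n\to Z_k^{n,m}$ is a uniform isomorphism. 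Thus $Z_k^n$ and $Z_k^{n,m}$ are uniformly equivalent.

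I do not expect a serious obstacle here; both parts are essentially bookkeeping. The one point that needs a little care in (1) is making sure the decomposition of $Z_k^n$ into integer-translated copies of lower-dimensional Euclidean spaces is genuinely a countable closed cover and then invoking the correct form of the sum theorem (for separable metric spaces $\dim$, $\sind$, and $\lind$ all agree, so the choice of which dimension function is meant is harmless, but it is cleanest to argue with $\dim$ and the countable sum theorem from \cite{engelking}). In (2) the only thing to watch is that "uniformly equivalent" is being used with respect to the metrics induced from $d_n$, which is immediate since scaling is bi-Lipschitz; no completeness or properness hypothesis is needed for this step.
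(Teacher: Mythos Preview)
Your proposal is correct and follows essentially the same approach as the paper. For (1) the paper also writes $Z_k^n$ as a countable union of copies of $\mathbb R^k$ (indexing by the set $I$ of integer coordinates with $|I|=n-k$ rather than by your $S$ with $|S|\le k$, a cosmetic difference) and invokes the countable sum theorem; for (2) the paper uses the inverse map $x\mapsto mx$ from $Z_k^{n,m}$ to $Z_k^n$, which is the same bi-Lipschitz scaling you describe.
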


\begin{proof}
(1) In fact, since $Z_0^n=\mathbb Z^n$ and $Z_n^n=\mathbb R^n$,
the cases that either $k=0$ or $k=n$ is true. 
Suppose that neither $k=0$ nor $k=n$. 
Here, we denote by $[X]^{\kappa}$ the set of all subsets of $X$ with
the cardinality $\kappa$. Observe that
$$Z_k^n=\bigcup_{I\in [\{1,\ldots,n\}]^{n-k}}
\prod_{i\in I}{\mathbb Z}_i\times
\prod_{i\not\in I}{\mathbb R}_i,$$
where ${\mathbb Z}_i=\mathbb Z$ and ${\mathbb R}_i=\mathbb R$
for $i=1,2,\ldots,n$. This indicates that
$Z_k^n$ is the union of countable subspaces 
each of which are homeomorphic to $\mathbb R^k$.
Then by
\cite[Theorem 3.1.8]{engelking2} $\dim Z_k^n\le k$. 
On the other hand, since $\mathbb R^k$ is closed embedded in $Z_k^n$, 
$\dim Z_k^n=k$.\par
(2) Define $f:Z_k^{n,m}\to Z_k^n$ by
$f(x)=mx$ for each $x\in Z_k^{n,m}$.
Clearly, since $f$ is a uniform isomorphism, 
$Z_k^n$ and $Z_k^{n,m}$ are uniformly equivalent.
\end{proof}

We are now ready to present our main result.

\bigskip

\begin{thm}\label{thm:Ind}
Suppose that $n \in \mathbb N,~k \in \mathbb Z$
and $n \geq k \geq 0$. 
If $d$ is the metric on $Z_{k}^n$ induced
by the usual metric on $\mathbb R^n$,
then $\lind  u_d Z_{k}^n \pois Z_{k}^n = k$.
\end{thm}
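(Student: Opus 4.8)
The plan is to establish the equality $\lind u_d Z_k^n \pois Z_k^n = k$ by proving the two inequalities separately, with the lower bound being the genuinely hard part. For the upper bound $\lind u_d Z_k^n \pois Z_k^n \le k$, I would first observe that $Z_k^n$ carries a proper metric, so $u_d Z_k^n$ is a metrizable-remainder-type compactification only in a weak sense; instead I would try to build an explicit finite closed cover of $Z_k^n$ by uniformly-nice pieces. Concretely, $Z_k^n$ decomposes (as in Lemma on $Z_k^n$) into countably many translates of factors of the form $\mathbb Z^{n-k}\times\mathbb R^k$. Using Lemma~\ref{lem:subspace}(1), the Smirnov remainder of each such factor, and more usefully of a ``fattened'' grid neighborhood, embeds in $u_d Z_k^n\pois Z_k^n$, and one can cover $Z_k^n$ by boundedly many (say $k+1$) families of such slabs chosen so that within each family the pieces are uniformly separated. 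Then a Smirnov-remainder version of the countable-sum / decomposition theorem for $\lind$ gives $\lind u_d Z_k^n\pois Z_k^n\le k$; the key input is that each slab contributes a remainder of large inductive dimension at most $k-1$ on its boundary, propagated by an inductive argument on $k$ using the partition characterization of $\lind$ recalled at the start of the section.

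For the lower bound $\lind u_d Z_k^n \pois Z_k^n \ge k$, the cleanest route is to exhibit a closed subspace of $u_d Z_k^n\pois Z_k^n$ that already has large inductive dimension $\ge k$, since $\lind$ is monotone on closed subspaces of normal spaces. By Lemma~\ref{lem:subspace}(2), it suffices to find inside $Z_k^n$ a uniform closed copy of a product $Y_1\times Y_2$ with $Y_1$ compact, $Y_2$ noncompact, and $\lind Y_1\ge k$ — this would plant a copy of $Y_1$, hence of an $\ge k$-dimensional compactum, inside the remainder. The natural candidate is $Y_1 = \mathbb I^k$ and $Y_2$ a copy of $\mathbb N$ (or $\mathbb J$): one wants a sequence of ``boxes'' $[0,1]^k\times\{\text{point}_j\}$ sitting in $Z_k^n$, marching off to infinity, each isometric to the unit cube and pairwise uniformly far apart. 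The subtlety is that $Z_k^n$ consists of grid hyperplanes, so a genuine $k$-cube $[0,1]^k$ does not literally sit inside $Z_k^n$ unless we place it inside one of the $\mathbb R^k$-factors $\prod_{i\in I}\mathbb Z_i\times\prod_{i\notin I}\mathbb R_i$; but then $[0,1]^k$ in that factor is fine, and translating by vectors in $\mathbb Z^{n-k}$ (on the integer coordinates) pushes it to infinity while keeping it inside $Z_k^n$ and preserving the metric. So the subspace $\bigsqcup_{j\in\mathbb N}\big([0,1]^k\times\{je_{i_0}\}\big)$, suitably described, is uniformly equivalent to $\mathbb I^k\times(\mathbb N, \text{discrete unit metric})$, and Lemma~\ref{lem:subspace}(2) then embeds $\mathbb I^k$ in $u_d Z_k^n\pois Z_k^n$. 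Since $\lind \mathbb I^k = k$ and $\mathbb I^k$ is compact hence closed in the remainder, we get $\lind u_d Z_k^n\pois Z_k^n\ge k$.

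Combining the two inequalities yields the theorem. I expect the main obstacle to be the upper bound: unlike $\dim$, the large inductive dimension does not behave well under arbitrary countable closed sums, so one cannot simply invoke $\dim Z_k^n = k$ and Smirnov's theorem. The argument must instead exploit the specific \emph{uniform} geometry of $Z_k^n$ — that it is a locally finite union of flat slabs meeting along lower-dimensional grid pieces with uniform separation between non-adjacent slabs — to run an induction on $k$ using the partition criterion for $\lind$ in the compactification, partitioning between two given disjoint closed sets of $u_d Z_k^n$ by a set whose trace on the remainder is controlled by the $(k-1)$-dimensional ``skeleton'' $Z_{k-1}^n$, whose Smirnov remainder has $\lind \le k-1$ by the inductive hypothesis. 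Care is also needed at the base case $k=0$, where $Z_0^n=\mathbb Z^n$ with a proper metric has $u_d Z_0^n \pois Z_0^n$ a compactification remainder that is zero-dimensional for $\lind$, which should follow from the fact that $\mathbb Z^n$ is uniformly equivalent to $\mathbb N$ with a proper metric and the standard description of such remainders.
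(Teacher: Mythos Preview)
Your overall architecture matches the paper's: induction on $k$, lower bound by planting a copy of $\mathbb I^k$ in the remainder via Lemma~\ref{lem:subspace}(2), upper bound via the partition criterion for $\lind$ with the partition living in the $(k-1)$-skeleton. Two points need correction.

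First, the upper bound has a genuine gap. You propose to partition an arbitrary pair $A,B$ of disjoint closed sets in the remainder by a set whose trace lies in the fixed skeleton $Z_{k-1}^n$. But the traces of $A$ and $B$ in $Z_k^n$ may be at arbitrarily small positive distance $\varepsilon$ (Proposition~\ref{prop:separation}), and the unit grid $Z_{k-1}^n$ is too coarse to separate them when $\varepsilon$ is small: both sets can run through the same open unit cube. The paper's remedy is to pass to a \emph{refined} skeleton $Z_{k-1}^{n,m}$ with $m$ chosen so that $5n/m<\varepsilon$; one builds $W_0$ as a union of closures of components of $Z_k^n\pois Z_{k-1}^{n,m}$ near $A$, sets $W_1=\cl{Z_k^n}(Z_k^n\pois W_0)$, and takes $L=W_0\cap W_1\subset Z_{k-1}^{n,m}$. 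A further nontrivial step, which you do not mention, is the verification that $\cl{u_dZ_k^n}L=\cl{u_dZ_k^n}W_0\cap\cl{u_dZ_k^n}W_1$; this is not automatic and requires Proposition~\ref{prop:separation} together with short arcs in $Z_k^n$. Only then is $L'=\cl{u_dZ_k^n}L\pois L$ a genuine partition between $A$ and $B$, and since $Z_{k-1}^{n,m}$ is uniformly equivalent to $Z_{k-1}^n$ the inductive hypothesis bounds $\lind L'$. Your earlier suggestion of a finite closed cover plus a sum/decomposition theorem is a dead end: $\lind$ has no countable closed sum theorem for general normal spaces, which is exactly why the partition route is forced.

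Second, your lower-bound construction $\bigsqcup_j\big([0,1]^k\times\{je_{i_0}\}\big)$ with $e_{i_0}$ an integer direction fails when $k=n$, since then $Z_n^n=\mathbb R^n$ has no integer coordinate left to translate. The paper instead takes $Y_j=[0,1]^{k-1}\times[2j,2j+1]\times\{\mathbf 0\}$, sending one of the \emph{real} coordinates to infinity; this works uniformly for all $1\le k\le n$. (Your base case $k=0$ is fine: $\mathbb Z^n$ is $1$-discrete, so $u_dZ_0^n\approx\beta\mathbb Z^n$ and the remainder has $\lind=0$.)
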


\begin{proof}
We may assume that
$d((x_1,x_2,\ldots,x_n),(y_1,y_2,\ldots,y_n)) 
= \sum_{i = 1}^n|x_i - y_i|$ (cf. \cite[p.47]{woods}).
Since $Z_{0}^n$ is 1-discrete for each $n \in \mathbb N$,
$u_dZ^n_0\approx\beta Z_{0}^n$ which 
is the Stone-{\v C}ech compactification of $Z_{0}^n$
(cf. \cite[Theorem 3.4]{woods}). 
Then $\lind  u_d Z_{0}^n \pois Z_{0}^n = 0$
(cf. \cite[Theorem 7.1.11 and 7.1.17]{engelking}).
Fix an $n \in \mathbb N$ with $n \geq k$.
Suppose that $\lind  u_d Z_{i}^n \pois Z_{i}^n = i$ 
for each $i$ with  $0 \leq i \leq k-1$.
We only need to show that $\lind u_dZ_k^n\pois Z_k^n=k$.

Let 
$$Y_j = [0,1]^{k-1}\times [2j,2j+1]
\times \{(\underbrace{0,0,\dots,0}_{n-k})\}
\subset Z_{k}^n~\mbox{and}~Y = \bigcup_{j \geq 1}Y_j \subset Z_{k}^n.$$
Since $[0,1]^k\times \mathbb N$ and $Y$ are uniformly equivalent
with suitable metrics, 
by Lemma \ref{lem:subspace}, 
we see that $[0,1]^k$ is embedded in $u_d Z_{k}^n \pois Z_{k}^n$,
thus,
$\lind  u_d Z_{k}^n \pois Z_{k}^n \geq k$ by 
\cite[Theorem 2.2.1]{engelking2}.

Let $A$ and $B$ be disjoint closed subsets in $u_d Z_{k}^n \pois Z_{k}^n$.
We show that there exists a partition $L'$  
between $A$ and $B$ with $\lind L' \leq k-1$.\par

Since $u_d Z_{k}^n$ is normal,
there exist open subsets $U$ and $V$ of $u_d Z_{k}^n$ such that
$A \subset U$, $B \subset V$, and 
$\cl{u_d Z_{k}^n}U \cap \cl{u_d Z_{k}^n}V = \emptyset$.
By Proposition \ref{prop:separation},
$\varepsilon = d(Z_{k}^n \cap \cl{u_d Z_{k}^n}U, 
Z_{k}^n \cap \cl{u_d Z_{k}^n}V) > 0$. 
Choose an $m \in \mathbb N$ with $5n/m < \varepsilon$. 
Let $\Lambda  = \{\cl{Z_{k}^n}C : C$ 
is a component of $Z_{k}^n \pois Z_{k-1}^{n,m}\},$
$W'_0 = \bigcup\{D \in \Lambda : 
D \cap \cl{u_d Z_{k}^n}U \neq \emptyset \}$,
$W_0 = \bigcup\{D \in \Lambda : 
D \cap W_0' \neq \emptyset \}$,
$W_1 =  \cl{Z_{k}^n} (Z_{k}^n \pois W_0)$, and
$L = W_0 \cap W_1$.
Note that $L={\rm Fr}_{Z_{k}^n} W_0 
=  {\rm Fr}_{Z_{k}^n} W_1$
is contained in $Z_{k-1}^{n,m}$.

{\bf Fact.} $\cl{u_d Z_{k}^n} L 
= \cl{u_d Z_{k}^n} W_0 \cap \cl{u_d Z_{k}^n} W_1$. \par

Note that $\cl{u_d Z_{k}^n} L 
\subset \cl{u_d Z_{k}^n} W_0 \cap \cl{u_d Z_{k}^n} W_1$ always holds. 
Suppose that we have a point 
$x \in \cl{u_d Z_{k}^n} W_0 \cap \cl{u_d Z_{k}^n} W_1 
\pois \cl{u_d Z_{k}^n} L$. 
Since $Z_{k}^n \cap \cl{u_d Z_{k}^n} L 
= Z_{k}^n \cap \cl{u_d Z_{k}^n} W_0 \cap \cl{u_d Z_{k}^n} W_1$,
$x \in u_d Z_{k}^n \pois Z_{k}^n$. 
Since $u_d Z_{k}^n$ is normal,
there exists a closed neighborhood
$S \subset u_d Z_{k}^n$ such that $x \in S$ and 
$S \cap \cl{u_d Z_{k}^n} L = \emptyset$. 
By Proposition \ref{prop:separation}, 
$d(W_0 \cap S, W_1 \cap S) = 0$
because $\cl{u_d Z_{k}^n}(W_0\cap S)\cap\cl{u_d Z_{k}^n}(W_1\cap S)
\not=\emptyset$.
Thus, there exist sequences 
$x_{i,0},x_{i,1},\ldots \in W_i \cap S$ 
for $i = 0,1$ such that
$\lim_{k \rightarrow \infty} d(x_{0,k},x_{1,k}) = 0$.
We may assume that $x_{i,0},x_{i,1},\ldots \not\in L$ for $i = 0,1$.
Then there exists an arc $P_k$ joining $x_{0,k}$ and $x_{1,k}$
in  $Z_{k}^n$ with $\lim_{k \rightarrow \infty} \diam P_k = 0$.
Since $L =  {\rm Fr}_{Z_{k}^n} W_0 =  {\rm Fr}_{Z_{k}^n} W_1$,
we can take an element $y_k \in L \cap P_k$ for each $k \in \mathbb N$.
Since $\lim_{k \rightarrow \infty} d(x_{0,k},y_{k}) 
= \lim_{k \rightarrow \infty} d(x_{1,k},y_{k}) = 0$,
$d(W_0 \cap S, L) =d(W_1 \cap S, L) = 0$.
By Proposition \ref{prop:separation},
we have $\cl{u_d Z_{k}^n}(W_i \cap S)
\cap \cl{u_d Z_{k}^n} L \neq \emptyset$ 
for $i=0,1$. This contradicts the fact that 
$S \cap \cl{u_d Z_{k}^n} L = \emptyset$,
as claimed.

Then, it suffices to show that
$L' =  \cl{u_d Z_{k}^n}L \pois L$ is a partition between $A$ and $B$.
Let $W^*_0 = u_d Z_{k}^n \pois \cl{u_d Z_{k}^n}W_1$,
$W^*_1 = u_d Z_{k}^n \pois \cl{u_d Z_{k}^n}W_0$ and
$X_i = W^*_i \pois Z_{k}^n$ for $i = 0,1$.
Notice that
\begin{align*}
u_d Z_{k}^n \pois \cl{u_d Z_{k}^n} L 
& =  u_d Z_{k}^n \pois (\cl{u_d Z_{k}^n}W_0 
\cap \cl{u_d Z_{k}^n} W_1)\\
& =  (u_d Z_{k}^n \pois \cl{u_d Z_{k}^n}W_0) 
\cup (u_d Z_{k}^n \pois \cl{u_d Z_{k}^n} W_1)\\
& =  W^*_0 \cup W^*_1.
\end{align*}

\noindent
This shows that 
$(u_d Z_{k}^n \pois Z_{k}^n) \pois L' = X_0 \cup X_1$,
and then we only need to show that 
$A \subset X_0$ and $B \subset X_1$. 
Notice that
$d(Z_{k}^n \cap \cl{u_d Z_{k}^n}U, W_1) \geq 1/m$.
By Proposition \ref{prop:separation},
$\cl{u_d Z_{k}^n}U \cap \cl{u_d Z_{k}^n}W_1 = \emptyset$,
and then $A \subset X_0$.
We show that 
$d(W_0,Z_{k}^n \cap \cl{u_d Z_{k}^n}V) \geq 2n/m$.
Suppose that 
$d(W_0,Z_{k}^n \cap \cl{u_d Z_{k}^n}V) < 2n/m$.
Since 
$W_0 \subset B_{3n/m}(Z_{k}^n \cap \cl{u_d Z_{k}^n}U,d)$,
$d(Z_{k}^n \cap \cl{u_d Z_{k}^n}U,
Z_{k}^n \cap \cl{u_d Z_{k}^n}V) < 5n/m$.
By the definition of $\varepsilon$, 
this is a contradiction.
So, by Proposition \ref{prop:separation},
$\cl{u_d Z_{k}^n}V \cap \cl{u_d Z_{k}^n}W_0 = \emptyset$,
and then $B \subset X_1$.

Now, since $L \subset Z_{k-1}^{n,m}$,
by Lemma \ref{lem:subspace},
$L'$ is embedded in $u_d Z_{k-1}^{n,m} \pois Z_{k-1}^{n,m}$.
Since $Z_{k-1}^{n,m}$ and $Z_{k-1}^n$ are uniformly equivalent, 
by \cite[Theorem 2.2.1]{engelking2},
$\lind L'\leq \lind u_d Z_{k-1}^{n,m}\pois Z_{k-1}^{n,m}
 = \lind u_d Z_{k-1}^n\pois Z_{k-1}^n = k-1$. 
By \cite[Proposition 1.6.2]{engelking2},
$\lind  u_d Z_{k}^n \pois Z_{k}^n \leq k$ 
for each $n \in \mathbb N$ and each $k\in\mathbb Z$ with $n\ge k\ge0$.
\end{proof}

\medskip

\begin{cor}\label{cor:largedim}
Let $n\in\mathbb N$ and $d$ the usual metric on 
$X=\mathbb R^n$ or $\mathbb J^n$.
Then $\sind u_d X \pois X   
= \lind  u_d X \pois X 
= \dim u_d X \pois X  = n$.
\end{cor}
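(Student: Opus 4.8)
The plan is to deduce Corollary~\ref{cor:largedim} from Theorem~\ref{thm:Ind} together with Smirnov's theorem, rather than to argue from scratch. First observe that $\mathbb R^n = Z_n^n$ by definition, and $d_n$ is precisely the metric on $Z_n^n$ induced from the usual metric on $\mathbb R^n$, so Theorem~\ref{thm:Ind} with $k=n$ gives immediately $\lind u_d \mathbb R^n \pois \mathbb R^n = n$. For $X = \mathbb J^n = [0,\infty)^n$, I would note that $\mathbb J^n$ with the usual metric is a uniform closed subspace of $\mathbb R^n$, and conversely $\mathbb R^n$ embeds as a uniform closed subspace of $\mathbb J^n$ (e.g. via a translation of a half-space, or by identifying $\mathbb R^{n}$ uniformly with $[0,\infty)^{n}$ is false, so instead use the closed uniform embedding of a copy of $[0,1]^{n-1}\times[0,\infty)$ and the monotonicity below). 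Concretely, $\mathbb J^n$ is uniformly equivalent to $[0,1]^{n-1}\times\mathbb J \times \{0\}\cup\cdots$; cleaner is to apply Lemma~\ref{lem:subspace}(1): since $\mathbb J^n$ is a closed uniform subspace of $\mathbb R^n$, $u_d\mathbb J^n\pois\mathbb J^n$ embeds in $u_d\mathbb R^n\pois\mathbb R^n$, and since $[0,1]^{n-1}\times\mathbb N$ is uniformly equivalent to a closed subspace of $\mathbb J^n$ (take $\bigcup_{j\ge1}[0,1]^{n-1}\times[2j,2j+1]$ as in the proof of the theorem), Lemma~\ref{lem:subspace}(2) shows $[0,1]^n$ embeds in $u_d\mathbb J^n\pois\mathbb J^n$, whence $\lind u_d\mathbb J^n\pois\mathbb J^n\ge n$ by \cite[Theorem 2.2.1]{engelking2}.

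For the upper bound on $\mathbb J^n$, the slick route is to invoke Theorem~\ref{thm:Ind} indirectly: $\mathbb J^n$ is a closed uniform subspace of $\mathbb R^n = Z_n^n$, so $u_d\mathbb J^n\pois\mathbb J^n$ embeds in $u_d Z_n^n\pois Z_n^n$, which has $\lind$ equal to $n$; but $\lind$ is \emph{not} in general monotone under taking subspaces, so this is the one genuinely delicate point. I would instead rerun the partition argument of Theorem~\ref{thm:Ind} verbatim with $Z_k^n$ replaced by $\mathbb J^n$ (and $Z_{k-1}^{n,m}$ replaced by the appropriate ``grid'' subset $\{x\in\mathbb J^n : |\{i:mx_i\in\mathbb Z\}|\ge n-1\}$): every step — normality of $u_d\mathbb J^n$, Proposition~\ref{prop:separation}, the Fact identifying $\cl{}(L)$, the arc-connecting argument (here one uses that $\mathbb J^n$ is locally arcwise connected and that the relevant components are nice), and the final appeal to \cite[Proposition 1.6.2]{engelking2} — goes through unchanged, giving $\lind u_d\mathbb J^n\pois\mathbb J^n\le n$ by the same induction on $k$. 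This establishes $\lind u_d\mathbb J^n\pois\mathbb J^n = n$ as well. Alternatively, one can be even more economical: $\mathbb J^n$ is uniformly equivalent to a closed subspace of $\mathbb R^n$ on which the ambient space retracts uniformly, but spelling that out is no shorter than re-deriving the bound.

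Finally, for the chain of equalities with the other two dimension functions, I would argue $\sind u_d X\pois X = \dim u_d X\pois X = n$ as follows. Smirnov's theorem quoted in the introduction gives $\dim u_d X\pois X = \sdim(X,d)$, the boundary dimension; for $X=\mathbb R^n$ or $\mathbb J^n$ with the usual (proper) metric one computes $\sdim(X,d) = n$ directly, or one simply notes $\dim u_d X\pois X \ge n$ because $[0,1]^n$ embeds in the remainder (covering dimension is monotone under closed — indeed arbitrary — subspaces) and $\dim u_d X\pois X \le \lind u_d X\pois X = n$ by the general inequality $\dim\le\lind$ for normal spaces. The remaining inequalities are the classical $\dim\le\sind\le\lind$ valid for compact (hence normal) Hausdorff spaces and their subspaces \cite[Theorem 1.6.3 and Theorem 2.2.1]{engelking2}; combined with $\dim = \lind = n$ already shown, these force $\sind u_d X\pois X = n$ too. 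The main obstacle in the whole argument is the non-monotonicity of $\lind$, which is why the $\mathbb J^n$ case cannot be reduced to $\mathbb R^n = Z_n^n$ by a one-line subspace remark and instead requires repeating the inductive partition construction of Theorem~\ref{thm:Ind} in the $\mathbb J^n$ setting.
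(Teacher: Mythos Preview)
Your argument is correct but significantly longer than necessary, and the ``main obstacle'' you identify is illusory. The remainder $u_d\mathbb J^n\pois\mathbb J^n$ is \emph{compact} (being closed in the compactification $u_d\mathbb J^n$), so its embedding into $u_d Z_n^n\pois Z_n^n$ provided by Lemma~\ref{lem:subspace}(1) is automatically a \emph{closed} embedding into a normal space; and $\lind$ \emph{is} monotone for closed subspaces of normal spaces --- this is exactly \cite[Theorem~2.2.1]{engelking2}, the very result already cited for the lower bound in Theorem~\ref{thm:Ind}. Hence the one-line subspace remark you dismissed does work, and the paper's proof proceeds precisely that way: it chains $\max\{\sind,\dim\}\le\lind$ via \cite[Theorems~1.6.3 and 3.1.28]{engelking2}, then $\lind u_d X\pois X\le\lind u_d Z_n^n\pois Z_n^n=n$ by closed-subspace monotonicity and Theorem~\ref{thm:Ind}, and finally $n\le\min\{\sind,\dim\}$ from the $\mathbb I^n$ embedding via \cite[Theorems~1.1.2 and 3.1.3]{engelking2}. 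No rerun of the partition induction is required, and both cases $X=\mathbb R^n$ and $X=\mathbb J^n$ are handled uniformly.

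One minor caution: the chain ``$\dim\le\sind\le\lind$ for compact Hausdorff spaces'' that you invoke is not a safe general fact (the relation between $\dim$ and $\sind$ can fail in that direction). The paper instead uses $\sind\le\lind$ and $\dim\le\lind$ separately for the upper bounds, and obtains $n\le\sind$ and $n\le\dim$ independently from the same $\mathbb I^n$ embedding; you should do likewise.
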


\begin{proof}
By Lemma \ref{lem:subspace} (1),
$u_dX\pois X$ is embedded in $u_d Z_{n}^n \pois Z_{n}^n$.
Thus, by \cite[Theorem 1.6.3, 2.2.1 and 3.1.28]{engelking2} 
and Theorem \ref{thm:Ind},
$$\max\{\sind u_d X\pois X, \dim u_d X\pois X \} 
\leq  \lind u_d X\pois X
\leq  \lind u_dZ_{n}^n \pois Z_{n}^n = n.$$
Let $Y_k = \mathbb I^{n-1} \times[2k,2k + 1]$ 
and $Y = \bigcup_{k \geq 1}Y_k$.  
Since $Y$ and $\mathbb I^n \times \mathbb N$ 
are uniformly equivalent with suitable metrics, 
by Lemma \ref{lem:subspace} (2),
$\mathbb I^n$ is embedded in $u_d X\pois X$.
By \cite[Theorem 1.1.2 and 3.1.3]{engelking2},
$$n \leq \min \{\sind u_d X\pois X, \dim u_d X\pois X\}
\leq  \lind  u_d X\pois X.$$
Thus, $\sind u_d X\pois X  = \lind  u_d X\pois X= \dim u_d X\pois X = n$.
\end{proof}

\medskip

As an application of Corollary \ref{cor:largedim}
we show that for any $n\in\mathbb N$
and any noncompact,
locally compact, connected, separable metrizable space $X$ 
there exists a proper metric $d_n$ on $X$ such that 
$\dim u_{d_n}X\pois X=\sind u_{d_n}X\pois X=\lind u_{d_n}X\pois X=n$.

\medskip

\begin{lem}\label{lem:approx}
Let $(X,d)$ and $(Y,\rho)$ be 
noncompact, connected,
proper metric spaces.
If there exists a perfect map $p:\mathbb J\to Y$ 
such that $u_{\rho|_{p(\mathbb J)}}p(\mathbb J)\pois p(\mathbb J)
\cong u_\rho Y\pois Y$, 
then there exists a proper metric 
$d_X$ compatible with the topology on $X$
such that $u_{d_X}X\pois X\cong u_\rho Y\pois Y$.
\end{lem}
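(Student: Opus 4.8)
The plan is to move the remainder from $Y$ onto the ``ray image'' $Z:=p(\mathbb J)$ and then onto $X$, by pulling a carefully chosen metric back along a proper surjection $X\to\mathbb J$. First note that $p\colon\mathbb J\to Z$ is a perfect surjection onto its image, hence proper (preimages of compacta are compact), and that $Z$ is closed, noncompact, connected and proper in $Y$; writing $\rho_Z:=\rho|_Z$, the hypothesis reads $u_{\rho_Z}Z\pois Z\cong u_\rho Y\pois Y$, so it is enough to build a compatible proper metric $d_X$ on $X$ with $u_{d_X}X\pois X\cong u_{\rho_Z}Z\pois Z$. Since $(X,d)$ is proper it is locally compact, $\sigma$-compact and separable metrizable, so its one-point compactification $\omega X$ is metrizable; I would fix a metric on $\omega X$ and let $e$ be its restriction to $X$ --- a bounded compatible metric that \emph{fades at infinity}: for every $\varepsilon>0$ there is a compact $C\subset X$ with $\diam_e(X\pois C)<\varepsilon$. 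Using a compact exhaustion of $X$ and Urysohn's lemma (and connectedness of $X$, which together with noncompactness forces the image to be all of $[0,\infty)$), I would also fix a proper continuous surjection $q\colon X\to\mathbb J$. Setting $pq:=p\circ q\colon X\to Z$, I define
$$d_X(x,x')\ :=\ \rho_Z\big(pq(x),pq(x')\big)+e(x,x').$$

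Then $d_X$ is a metric (a pseudometric plus a metric), it is compatible (it dominates $e$, and each summand is continuous on $X\times X$), and it is proper (a closed $d_X$-ball about $x_0$ lies in $(pq)^{-1}$ of a compact $\rho_Z$-ball about $pq(x_0)$, as $Z$ is proper and $pq$ is a composite of proper maps). Moreover $pq\colon(X,d_X)\to(Z,\rho_Z)$ is $1$-Lipschitz, hence uniformly continuous, so it extends to a continuous surjection $\widehat{pq}\colon u_{d_X}X\to u_{\rho_Z}Z$ (cf.\ \cite[Theorem 2.9 and 2.10]{woods}); since $pq$ is proper a net in $X$ running to a point of $u_{d_X}X\pois X$ cannot have $pq$-image eventually inside a compact subset of $Z$, so $\widehat{pq}(u_{d_X}X\pois X)\subset u_{\rho_Z}Z\pois Z$, and as points of $X$ go into $Z$ the restriction $g:=\widehat{pq}|\colon u_{d_X}X\pois X\to u_{\rho_Z}Z\pois Z$ is onto.

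The crux, and where I expect the real work to be, is to show $g$ is \emph{injective}; then $g$ is a homeomorphism of compacta and, with the hypothesis, the lemma follows. Given $\xi\neq\eta$ in $u_{d_X}X\pois X$, I would use normality of $u_{d_X}X$ and density of $X$ to get disjoint closed $A,B\subset X$ with $\xi\in\cl{u_{d_X}X}A$, $\eta\in\cl{u_{d_X}X}B$ and $\cl{u_{d_X}X}A\cap\cl{u_{d_X}X}B=\emptyset$; by Proposition \ref{prop:separation}, $\delta:=d_X(A,B)>0$. Now pick a compact $C\subset X$ with $\diam_e(X\pois C)<\delta/2$ and pass to $A^+:=\cl{X}(A\pois C)$, $B^+:=\cl{X}(B\pois C)$. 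Since $A\cap C$ and $B\cap C$ are compact and $\xi,\eta\notin X$, we still have $\xi\in\cl{u_{d_X}X}A^+$ and $\eta\in\cl{u_{d_X}X}B^+$, while $A^+,B^+\subset\cl{X}(X\pois C)$, a set of $e$-diameter $<\delta/2$; hence for $a\in A^+$, $b\in B^+$,
$$\rho_Z\big(pq(a),pq(b)\big)\ =\ d_X(a,b)-e(a,b)\ \ge\ \delta-\delta/2\ >\ 0.$$
So $\rho_Z(pq(A^+),pq(B^+))>0$, and Proposition \ref{prop:separation} applied \emph{inside} $(Z,\rho_Z)$ gives $\cl{u_{\rho_Z}Z}pq(A^+)\cap\cl{u_{\rho_Z}Z}pq(B^+)=\emptyset$; since $g(\xi)\in\cl{u_{\rho_Z}Z}pq(A^+)$ and $g(\eta)\in\cl{u_{\rho_Z}Z}pq(B^+)$ by continuity, $g(\xi)\neq g(\eta)$.

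The essential point --- and the likely pitfall --- is the choice of the correction term $e$: an arbitrary bounded compatible metric on $X$ would preserve local (``angular'') directions all the way out to the remainder and make $g$ non-injective, whereas the fading-at-infinity metric inherited from $\omega X$ forces $d_X$ to behave near infinity like a rough isometry onto $(Z,\rho_Z)$, which is precisely what Proposition \ref{prop:separation} exploits in the last step. (The hypothesis that $p$ be perfect enters only through properness of $pq$ and the reduction to $Z$; connectedness of $X$ is used only to make $q$ surject onto $\mathbb J$.)
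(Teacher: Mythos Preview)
Your argument is correct and the construction of $d_X$ coincides with the paper's: both take $d_X(x,x')=d_{\omega X}(x,x')+\rho\big(p\circ q(x),p\circ q(x')\big)$ for a metric $d_{\omega X}$ on the one-point compactification and a proper surjection $q\colon X\to\mathbb J$ (the paper simply uses $q(x)=d(x,x_0)$, which is cleaner than your Urysohn/exhaustion route but functionally identical). The divergence is in the endgame. The paper realizes $(X,d_X)$ as the closed graph $G(X)\subset(\omega X\times Y,\sigma)$, observes that $G(X)$ and $\{p_\infty\}\times p(\mathbb J)$ are mutually close at infinity (Fact~2), and then invokes \cite[Theorem~4.2]{woods} to conclude that the two closed sets have the same Smirnov remainder inside $u_\sigma(\omega X\times Y)$. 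You instead extend the $1$-Lipschitz map $pq$ to $\widehat{pq}\colon u_{d_X}X\to u_{\rho_Z}Z$ and verify directly, using Proposition~\ref{prop:separation} twice, that its restriction to the remainders is a bijection; the ``fading'' of $e$ is exactly what converts a positive $d_X$-gap between $A^+,B^+$ into a positive $\rho_Z$-gap between $pq(A^+),pq(B^+)$. Your route is more self-contained (it does not need \cite[Theorem~4.2]{woods}), while the paper's route packages the analytic step into a single citation once the graph embedding is set up; the underlying geometric content---$d_X$ agrees with the pullback of $\rho_Z$ up to an error that vanishes at infinity---is the same in both.
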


\begin{proof}
Fix an $x_0\in X$. Define $f:X\to\mathbb J$
by $f(x)=d(x,x_0)$ for each $x\in X$. Since $d$ is proper,
$f$ is a perfect onto map. Then $g=p\circ f:X\to Y$ is a perfect map.
Let $\omega X$ be the one-point compactification of $X$
and let $\omega X=X\cup\{p_{\infty}\}$ as a set.
Define $G:X\to\omega X\times Y$ by
$G(x)=(x,g(x))$ for each $x\in X$. \par

{\bf Fact 1}. $G:X\to\omega X\times Y$ is a closed embedding.\par
It suffices to show that $G$ is closed.
Let $A$ be a closed subset of $X$.  
Suppose the contrary that
$\cl{\omega X\times Y}G(A)\pois G(A)\not=\emptyset$.
Take a point $(x,y)\in\cl{\omega X\times Y}G(A)\pois G(A)$.
We may assume that $x=p_\infty$.
There exists a sequence
$\{(x_n,g(x_n))\}_{n\in\mathbb N}\subset G(A)$  such that
$x_n\to p_{\infty}$ and $g(x_n)\to y$ if $n\to\infty$.
Since $\lim_{n\to\infty}f(x_n)=\infty$, 
we may assume that $f(x_n)>\max\{n,f(x_1),\ldots,f(x_{n-1})\}$ 
for each $n\in\mathbb N$. 
By $\sigma$-compactness of $Y$, there exists a compact cover
$\{K_n\}_{n\in\mathbb N}$ of $Y$ such that
$B_1(K_n,\rho)\subset\naibu{Y}K_{n+1}$ for each $n\in\mathbb N$.
Here, there exists an $N\in\mathbb N$ such that $y\in K_N$. 
Since $g$ is a perfect map,
$g(\{x_n\}_{n\in\mathbb N})\pois K_n\not=\emptyset$ 
for each $n\in\mathbb N$, and then
$\lim_{n\to\infty}\rho(g(x_n),y)=\infty$.
This is a contradiction, as claimed.\par

Let $d_{\omega X}$ be a metric on $\omega X$, and let
$\sigma((x,y),(x',y'))=d_{\omega X}(x,x')+\rho(y,y')$ for each
$(x,y),(x',y')\in\omega X\times Y$. 
Put $d_X(x,x')=\sigma((x,g(x)),(x',g(x')))$ for each $x,x'\in X$.
Since $\sigma$ is a proper metric on $\omega X\times Y$,
by Fact 1, $d_X$ is a proper metric compatible with the topology on $X$.
Moreover, we note that $(X,d_X)$ and $(G(X),\sigma|_{G(X)})$ are uniformly equivalent.
Thus, by \cite[Theorem 2.9 and 2.10]{woods}, 
$u_{d_X}X\pois X\cong\cl{u_{\sigma}(\omega X\times Y)}G(X)\pois G(X)$.
\par

{\bf Fact 2}. 
(1) $\lim_{n\to\infty}\sup
\{\sigma(z,G(X)):
z\in\{p_{\infty}\}\times p(\mathbb J)\pois\omega X\times K_n\}=0$.
\par
\noindent(2) $\lim_{n\to\infty}\sup
\{\sigma(z,\{p_{\infty}\}\times p(\mathbb J)):
z\in G(X)\pois\omega X\times K_n\}=0$.
\par
Notice that for each $n\in\mathbb N$ 
there exists an $\ell_n\in\mathbb N$
such that $g(X\pois B_{1/n}(p_{\infty},d_{\omega X}))
\linebreak\subset K_{\ell_n}$ 
and $K_{\ell_n}\subset K_{\ell_m}$
whenever $n<m$.
Now, we show (1). Fix an $n\in\mathbb N$ and 
take a point $z=(p_{\infty},p(t))\not\in\omega X\times K_m$
with $m\ge \ell_n$. Then 
$g(X\pois B_{1/n}(p_{\infty},d_{\omega X}))
\subset K_{\ell_n}\subset K_m$.
Since $p(t)\not\in K_m$, 
$p(t)\not\in g(X\pois B_{1/n}(p_{\infty},d_{\omega X}))$,
and then $p(t)\in 
g(B_{1/n}(p_{\infty},d_{\omega X})\pois \{p_{\infty}\})$.
As $f$ is surjective, there exists an 
$x\in B_{1/n}(p_{\infty},d_{\omega X})\pois\{p_\infty\}$
such that $p(t)=p(f(x))=g(x)\in 
g(B_{1/n}(p_{\infty},d_{\omega X})\pois\{p_\infty\})$.
Since $(x,g(x))\in G(X)$,
$$\sigma(z,G(X))\le\sigma(z,(x,g(x)))=
\sigma((p_{\infty},p(t)),(x,p(t)))
=d_{\omega X}(p_{\infty},x)<1/n.$$
Then the proof of (1) is complete. 
To prove (2), take a point 
$z=(x,g(x))\in G(X)\pois \omega X\times K_m$.
We may assume that $m\ge \ell_n$. Since there exists a $t\in\mathbb J$
such that $p(t)=g(x)$,
$$\sigma(z,\{p_{\infty}\}\times p(\mathbb J))
\le \sigma(z,(p_{\infty},p(t)))
=\sigma((x,p(t)),(p_{\infty},p(t)))
=d_{\omega X}(x,p_{\infty})<1/n.$$
Then the proof of (2) is complete.\par
By \cite[Theorem\ 2.9 and 2.10]{woods}, Fact 2 and
\cite[Theorem 4.2]{woods},
\begin{align*}
u_{d_X}X\pois X 
\cong & ~~\cl{u_{\sigma}(\omega X\times Y)}G(X)\pois G(X)\\
    = & ~~\cl{u_{\sigma}(\omega X\times Y)}(\{p_{\infty}\}
          \times p(\mathbb J))
       \pois (\{p_{\infty}\}\times p(\mathbb J))\\
\cong & ~~u_{\sigma|_{\{p_{\infty}\}\times p(\mathbb J)}}
          (\{p_{\infty}\}\times p(\mathbb J))
          \pois (\{p_{\infty}\}\times p(\mathbb J))\\ 
\cong & ~~u_{\rho|_{p(\mathbb J)}}p(\mathbb J)
          \pois p(\mathbb J)\\
\cong & ~~u_{\rho}Y\pois Y.
\end{align*}

Then the proof is complete.
\end{proof}

\medskip

\begin{ques}
Let $(X,\rho)$ be a connected proper metric space
such that $u_\rho X \pois X$ is connected. 
Does there exist a compatible metric $d$ on $\mathbb J$
such that $u_{d}\mathbb J\pois \mathbb J\cong u_\rho X \pois X$?
\end{ques}

\medskip

A metrizable space is said to be a
{\it continuum} provided that
it is compact and connected. A space is said to be a
{\it Peano continuum} provided that it is a
locally connected continuum.\par

\medskip

\begin{thm}\label{thm:dimremainder}
Let $X$ be a noncompact, locally compact, connected, 
separable metrizable space.
Then for any $n\in\mathbb N$ there exists a proper metric $d_n$ 
compatible with the topology on $X$
such that $\dim u_{d_n}X\pois X=
\sind u_{d_n}X\pois X=\lind u_{d_n}X\pois X=n$. 
\end{thm}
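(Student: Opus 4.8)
The plan is to reduce the general space $X$ to the model case $X=\mathbb J^n$ (equivalently $\mathbb R^n$) already handled in Corollary \ref{cor:largedim}, via the approximation machinery of Lemma \ref{lem:approx}. More precisely, set $Y=\mathbb J^n$ with its usual metric $\rho=d_n$. By Corollary \ref{cor:largedim} we have $\dim u_\rho Y\pois Y=\sind u_\rho Y\pois Y=\lind u_\rho Y\pois Y=n$, and $Y$ is a noncompact, connected, locally compact (hence $\sigma$-compact, and with the usual metric, proper) metric space. So if we can produce a perfect map $p:\mathbb J\to Y=\mathbb J^n$ with $u_{\rho|_{p(\mathbb J)}}p(\mathbb J)\pois p(\mathbb J)\cong u_\rho Y\pois Y$, then Lemma \ref{lem:approx} applied to the given $X$ (which is noncompact, locally compact, connected, separable metrizable, hence carries a compatible proper metric $d$ by standard results) yields a proper metric $d_n$ on $X$ with $u_{d_n}X\pois X\cong u_\rho Y\pois Y$, and the dimension equalities transfer since $\dim$, $\sind$, $\lind$ are topological invariants.

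\textbf{Construction of the perfect map $p:\mathbb J\to\mathbb J^n$.} The key step is to exhibit such a $p$ whose image is uniformly ``thick enough'' that its Smirnov remainder agrees with that of all of $\mathbb J^n$. The natural candidate is to take $p(\mathbb J)$ to be a closed subset of $\mathbb J^n$ uniformly equivalent to $\mathbb J^n$ in the relevant sense — for instance a ``staircase'' region $\bigcup_{j\ge 0}\big([0,1]^{n-1}\times[j,j+1]\big)$-type set rescaled, which is a closed subspace of $\mathbb J^n$ on which the subspace metric is uniformly equivalent to a metric making the inclusion $\pi_d$-close to $\mathbb J^n$; then use \cite[Theorem 2.9 and 2.10]{woods} together with the gluing/neighborhood argument in the style of Fact 2 of Lemma \ref{lem:approx} to identify the two remainders. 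One should check that $p$ can be taken perfect: parametrizing $\mathbb J$ by arclength along a space-filling-type surjection of $\mathbb J$ onto $\mathbb J^n$ that is proper (preimages of compacta are compact) and has the property that the induced map on Smirnov remainders is a homeomorphism. Alternatively, and perhaps more cleanly, one bypasses building $p$ directly: observe that $\mathbb J$ itself, with a suitable \emph{non-standard} compatible proper metric, already realizes $u\pois$-remainder $\cong u_{\rho}\mathbb J^n\pois\mathbb J^n$ — this is exactly the content one needs, and it may be proven by the same closed-embedding-into-$\omega\mathbb J\times\mathbb J^n$ trick used inside Lemma \ref{lem:approx}, taking $g:\mathbb J\to\mathbb J^n$ perfect and applying \cite[Theorem 4.2]{woods}.

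\textbf{Assembling the argument.} With $p$ (hence the hypothesis of Lemma \ref{lem:approx}) in hand, the proof is: (i) equip the given $X$ with any compatible proper metric $d$ (exists since $X$ is locally compact, connected, separable metrizable); (ii) apply Lemma \ref{lem:approx} with $(Y,\rho)=(\mathbb J^n,\textrm{usual})$ to obtain a compatible proper metric $d_n$ on $X$ with $u_{d_n}X\pois X\cong u_\rho\mathbb J^n\pois\mathbb J^n$; (iii) invoke Corollary \ref{cor:largedim} to conclude $\dim u_{d_n}X\pois X=\sind u_{d_n}X\pois X=\lind u_{d_n}X\pois X=n$.

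\textbf{Main obstacle.} The serious work is entirely in step of verifying the hypothesis of Lemma \ref{lem:approx}: namely producing a perfect map $p:\mathbb J\to\mathbb J^n$ (or directly a proper metric on $\mathbb J$) whose Smirnov remainder is homeomorphic to $u\pois$ of $\mathbb J^n$. One must control the uniform structure so that the remainder does not collapse to something lower-dimensional, which is where the ``staircase'' padding and \cite[Theorem 4.2]{woods} (the tool used already to collapse $\{p_\infty\}\times p(\mathbb J)$ inside $\omega X\times Y$) do the work; the connectivity and local compactness of $X$ are used precisely to guarantee the ambient proper metric $d$ exists and that $f(x)=d(x,x_0)$ is a perfect surjection onto $\mathbb J$, feeding Lemma \ref{lem:approx}.
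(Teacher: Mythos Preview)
Your overall strategy --- reduce to $(\mathbb J^n,\rho)$ via Lemma~\ref{lem:approx} and then invoke Corollary~\ref{cor:largedim} --- is exactly the paper's approach, and your assembly steps (i)--(iii) are correct. The only genuine gap is the one you flag yourself: constructing the perfect map $p:\mathbb J\to\mathbb J^n$ with $u_{\rho|_{p(\mathbb J)}}p(\mathbb J)\pois p(\mathbb J)\cong u_\rho\mathbb J^n\pois\mathbb J^n$.

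You overcomplicate this step. Your ``staircase padding'' idea aims at a \emph{proper} closed subset $p(\mathbb J)\subsetneq\mathbb J^n$ whose Smirnov remainder nonetheless coincides with that of $\mathbb J^n$; that would indeed require a delicate uniform-structure argument to rule out collapse. The paper bypasses all of this by simply making $p$ \emph{surjective}. Write $\mathbb J^n=\bigcup_{i\ge1}D_i$ with $D_i=[0,i]^n\pois\naibu{\mathbb J^n}[0,i-1]^n$; each annular shell $D_i$ is a Peano continuum, so by Hahn--Mazurkiewicz there is a continuous surjection $p_i:[i-1,i]\to D_i$, and one can match endpoints $p_i(i)=p_{i+1}(i)\in\fr{\mathbb J^n}[0,i]^n$ so that $p=\bigcup_i p_i:\mathbb J\to\mathbb J^n$ is a well-defined continuous surjection. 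It is perfect because $p^{-1}([0,N]^n)\subset[0,N+1]$. Since $p(\mathbb J)=\mathbb J^n$, the remainder condition in Lemma~\ref{lem:approx} is \emph{trivially} satisfied, and no worry about dimensional collapse ever arises.
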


\begin{proof}
By Corollary \ref{cor:largedim} and Lemma \ref{lem:approx}
we only need to construct a perfect map $p:\mathbb J\to \mathbb J^n$ 
such that $u_{\rho|_{p(\mathbb J)}}p(\mathbb J)\pois p(\mathbb J)
\cong u_\rho \mathbb J^n\pois \mathbb J^n$, where
$\rho$ is the usual metric on $\mathbb J^n$. \par
Let $K_i=[0,i]^n$ and $D_i=K_i\pois \naibu{\mathbb J^n}K_{i-1}$
for each $i\in\mathbb N$, where $K_0=\{\mathbf 0\}$. 
Choose $\ell_i \in \fr{\mathbb J^n}{K_i}$ for each $i \in\mathbb N$.
There exists a surjective continuous map 
$p_i : [i-1,i] \rightarrow D_i$ such that $p_i(i-1) = \ell_i$
and $p_i(i)=\ell_{i+1}$ because $D_i$ is a Peano continuum.
Define $p: \mathbb J \rightarrow \mathbb J^n$ by
$p=\bigcup_{i=1}^\infty p_i$. 
Note that $p$ is well-defined, and is a perfect map. 
By \cite[Theorem 2.9, 2.10 and 4.2]{woods},
$u_{\rho|_{p(\mathbb J)}}p(\mathbb J)\pois p(\mathbb J)\cong 
u_\rho \mathbb J^n\pois \mathbb J^n$, as claimed.
\end{proof}

\bigskip

\section{Smirnov remainder generated by a totally bounded metric}

\bigskip

A metric $d$ on $X$ is said to be {\it totally bounded}
provided that $(X,d)$ is a totally bounded metric space.
In this section, we consider totally bounded metric spaces, 
and provide a counterpart of Theorem \ref{thm:dimremainder} 
and an approximation of the Stone-{\v C}ech compactification.

\medskip

\begin{prop}
Let $X$ be a noncompact, locally compact, separable metrizable space.
Then there exists a totally bounded metric $d_n$ compatible
with the topology on $X$ such that 
$\dim u_{d_n}X\pois X=\sind u_{d_n}X\pois X=\lind u_{d_n}X\pois X=n$
for each $n\in\mathbb Z$ with $n\ge0$.
\end{prop}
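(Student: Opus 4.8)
The plan is to reduce to the connected case handled by Theorem~\ref{thm:dimremainder} via a one-point compactification trick, just as in the proof of Lemma~\ref{lem:approx}. First I would treat the degenerate case $n=0$ separately: one must produce a totally bounded metric on $X$ whose Smirnov remainder is $0$-dimensional. Since $X$ is locally compact, noncompact, separable metrizable, it is $\sigma$-compact, so fix a compact exhaustion $\{K_i\}_{i\in\mathbb N}$ with $K_i\subset\naibu{X}K_{i+1}$ and $X=\bigcup_i K_i$; a totally bounded metric can be chosen making the ``shells'' $K_{i+1}\pois\naibu{X}K_i$ shrink in diameter to a single limit, so that $u_dX$ is the one-point compactification $\omega X$ and $u_dX\pois X$ is a point, which is $0$-dimensional in all three dimension functions. (Alternatively, arrange $u_dX\approx\beta\mathbb N$-type behaviour with $1$-discreteness as in the proof of Theorem~\ref{thm:Ind}, giving a remainder homeomorphic to a subspace of $\mathbb N^*$ that is zero-dimensional; either construction works.)

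For $n\ge1$ the idea is to embed the known model $\mathbb J^n$ (or $\mathbb I^n$) into the Smirnov remainder while keeping the total metric finite in diameter. I would first pass from $X$ to $\omega X=X\cup\{p_\infty\}$ and work inside $\omega X\times\mathbb J^n$. Using $\sigma$-compactness pick a perfect onto map $f:X\to\mathbb J$ (on each $X$, using an exhaustion function as in Lemma~\ref{lem:approx}, but now composed with a homeomorphism $\mathbb J\to[0,1)$ to make the target bounded), then compose with the perfect surjection $p:\mathbb J\to\mathbb J^n$ constructed in the proof of Theorem~\ref{thm:dimremainder} whose restriction to $p(\mathbb J)$ has Smirnov remainder $\cong u_\rho\mathbb J^n\pois\mathbb J^n$; call the composite $g:X\to\mathbb J^n$, a perfect map. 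Define $G:X\to\omega X\times\mathbb J^n$ by $G(x)=(x,g(x))$; exactly as in Fact~1 of Lemma~\ref{lem:approx}, $G$ is a closed embedding. Now equip $\omega X$ with a bounded metric $d_{\omega X}$ (it is compact metrizable), put $\sigma((x,y),(x',y'))=d_{\omega X}(x,x')+\rho(y,y')$ on $\omega X\times\mathbb J^n$, which is totally bounded since both factors are, and let $d_n(x,x')=\sigma((x,g(x)),(x',g(x')))$. Then $d_n$ is a totally bounded metric compatible with the topology of $X$, and $(X,d_n)$ is uniformly equivalent to $(G(X),\sigma|_{G(X)})$.

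The remaining work is to identify the remainder. By \cite[Theorem 2.9 and 2.10]{woods}, $u_{d_n}X\pois X\cong\cl{u_\sigma(\omega X\times\mathbb J^n)}G(X)\pois G(X)$. The analogue of Fact~2 in Lemma~\ref{lem:approx} shows that, at infinity, $G(X)$ and $\{p_\infty\}\times p(\mathbb J)$ have the same closure in $u_\sigma(\omega X\times\mathbb J^n)$ modulo their respective copies inside $X$: the key point is that the first coordinate $d_{\omega X}(x,p_\infty)$ of any point of $G(X)$ near the remainder tends to $0$, since leaving every compact set in $X$ forces $x\to p_\infty$, so Hausdorff-distance estimates between $G(X)$ and $\{p_\infty\}\times p(\mathbb J)$ outside large compacta go to $0$. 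Applying \cite[Theorem 4.2]{woods} then gives
\begin{align*}
u_{d_n}X\pois X
&\cong\cl{u_\sigma(\omega X\times\mathbb J^n)}(\{p_\infty\}\times p(\mathbb J))\pois(\{p_\infty\}\times p(\mathbb J))\\
&\cong u_{\rho|_{p(\mathbb J)}}p(\mathbb J)\pois p(\mathbb J)\\
&\cong u_\rho\mathbb J^n\pois\mathbb J^n,
\end{align*}
and Corollary~\ref{cor:largedim} yields $\dim u_{d_n}X\pois X=\sind u_{d_n}X\pois X=\lind u_{d_n}X\pois X=n$.

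\textbf{Main obstacle.} The delicate point is that $X$ need not be connected, so the exhaustion function $f(x)=d(x,x_0)$ used in Lemma~\ref{lem:approx} is not available verbatim and one must build a perfect onto map $X\to\mathbb J$ by hand from a compact exhaustion (e.g.\ piecing together a map that sends the $i$-th shell onto $[i-1,i]$ using local connectedness componentwise, or simply using normality to get a perfect function with values growing to infinity). Once such an $f$ is in hand everything proceeds as in Lemma~\ref{lem:approx}; the $n=0$ case must also be argued on its own since Theorem~\ref{thm:dimremainder} requires $n\ge1$. Making the total metric $\sigma$ totally bounded rather than proper is the only structural change, and it is immediate because a finite sum of totally bounded metrics on a product of totally bounded spaces is totally bounded.
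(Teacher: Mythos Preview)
Your proposal contains a genuine error at the step where you assert that $\sigma=d_{\omega X}+\rho$ on $\omega X\times\mathbb J^n$ is totally bounded ``since both factors are.'' The factor $(\mathbb J^n,\rho)$ with the usual metric $\rho$ is \emph{not} totally bounded---it is proper and unbounded---so $\sigma$ is not totally bounded, and hence neither is the induced metric $d_n$ on $X$. There is no easy repair: Corollary~\ref{cor:largedim} computes the Smirnov remainder of $\mathbb J^n$ for the \emph{proper} usual metric, and the machinery of Lemma~\ref{lem:approx} that you are invoking relies essentially on properness. If instead you replace $\rho$ by a totally bounded metric (say the one induced from $[0,1)^n\subset\mathbb R^n$), then $u_\rho\mathbb J^n$ becomes the metric completion $[0,1]^n$, whose remainder has dimension $n-1$, and Corollary~\ref{cor:largedim} no longer says anything. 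Proper plus totally bounded forces compactness, so the two regimes of Sections~2 and~3 cannot be mixed in this way.

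The paper's argument is entirely different and much shorter. By the theorem of Aarts and Van Emde Boas \cite{ab} (cf.\ also \cite{ss}), any noncompact locally compact separable metrizable $X$ admits, for each $n\in\mathbb N$, a metrizable compactification $\alpha X$ with $\alpha X\pois X\cong\mathbb I^n$. Restrict any compatible metric on $\alpha X$ to $X$; the result $d_n$ is automatically totally bounded, and Proposition~\ref{prop:separation} gives $u_{d_n}X\approx\alpha X$. Since the remainder $\mathbb I^n$ is compact metrizable, $\dim=\sind=\lind=n$ there. The case $n=0$ is handled, as you say, by the one-point compactification.
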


\begin{proof} 
In the case that $n=0$ is clear. 
By Aarts and Van Emde Boas' result (cf. \cite{ab} and \cite{ss}),
for any $n\in\mathbb N$
there exists a compactification $\alpha X$ of $X$
such that $\alpha X\pois X\cong\mathbb I^n$.
Note that $\alpha X$ is metrizable because
$\alpha X$ has countable network weight.
Then, let $\rho$ be a suitable metric on $\alpha X$
and let $d_n=\rho|_X$. By Proposition \ref{prop:separation},
$\alpha X\approx u_{d_n} X$, as claimed.
\end{proof}

\medskip

A compact Hausdorff space is a {\it weak Peano space} if 
it contains a dense, continuous image of the real line $\mathbb R$.
Note that every weak Peano space is not necessarily metrizable.
For example, the Stone-{\v C}ech compactification $\beta\mathbb R$
is a nonmetrizable weak Peano space.\par
Here, we show the following lemma which is an
elaborate version for the
locally compact separable metrizable spaces 
concerning Theorem 3 in \cite{ch}.

\medskip

\begin{lem}\label{lem:singular}
Let $X$ be a noncompact, locally compact, separable 
metrizable space and $K$ a nondegenerate metrizable weak Peano space. 
Let $A$ and $B$ be disjoint noncompact closed subsets of $X$.
If there exists a closed copy $N$ of $\mathbb N$ in $X$
with $N\cap(A\cup B)=\emptyset$, 
then there exists a totally bounded metric $d$
compatible with the topology on $X$ such that $u_d X\pois X\cong K$
and $\cl{u_d X}A\cap\cl{u_d X}B=\emptyset$.
\end{lem}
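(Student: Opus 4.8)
The plan is to build the desired totally bounded metric $d$ by first constructing an explicit compactification $\gamma X$ of $X$ whose remainder is homeomorphic to $K$ and which keeps $\cl{\gamma X}A$ and $\cl{\gamma X}B$ apart, and then to invoke Proposition \ref{prop:separation} to recognize $\gamma X$ as a Smirnov compactification $u_d X$ for a suitable compatible totally bounded metric $d$ (take any admissible metric on the metrizable compactum $\gamma X$ and restrict it to $X$; it is automatically totally bounded and, by the characterization in Proposition \ref{prop:separation}, its Smirnov compactification is equivalent to $\gamma X$). So the whole problem reduces to a topological construction of $\gamma X$.

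First I would exploit the closed copy $N\cong\mathbb N$ with $N\cap(A\cup B)=\emptyset$. Since $X$ is locally compact, separable, metrizable and noncompact, write $X$ as an increasing union of compacta and peel off a neighborhood of $N$: more precisely, I want to find a clopen (in $X$) or at least closed $\sigma$-compact noncompact set $P$ containing $N$, disjoint from $A\cup B$, together with a complementary closed set $Q=\cl{X}(X\pois P)$ so that $A\cup B\subset Q$ and $P\cap Q$ is compact. One then compactifies the two pieces separately: on $P$ one uses the hypothesis that $K$ is a metrizable weak Peano space — it contains a dense continuous image of $\mathbb R$, hence (reparametrizing) a dense continuous image of $\mathbb J$ — together with a perfect map $P\to\mathbb J$ coming from $\sigma$-compactness, to produce a compactification of $P$ with remainder $K$, exactly along the lines of Lemma \ref{lem:approx} and Theorem \ref{thm:dimremainder} but carried out in the totally bounded (rather than proper) setting. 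On the complementary piece $Q$ one uses a one-point compactification, or more carefully a compactification chosen so that the traces of $A$ and $B$ have disjoint closures; since $A,B$ are disjoint closed subsets of the locally compact $\sigma$-compact space $Q$, such a metrizable compactification exists (e.g. by the Aarts–Van Emde Boas type result quoted in the preceding Proposition, or simply by a partition-of-unity/embedding argument separating $A$ from $B$ by a bounded uniformly continuous function). Finally glue the two compactifications along the compact overlap to get $\gamma X$; the remainder of $\gamma X$ is the remainder coming from the $P$-side, namely $K$, because the $Q$-side contributes nothing new to the remainder once $A$ and $B$ are already separated there and $K$ is forced to be the whole remainder.

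The step I expect to be the main obstacle is the gluing and the verification that the remainder of the assembled $\gamma X$ is \emph{exactly} $K$ and not $K$ together with extra points from the $Q$-side. This requires choosing the compactification of $Q$ so that its remainder either is empty (impossible if $Q$ is noncompact) or is absorbed — in practice one arranges that $Q$ is compactified with a remainder that embeds into, or is collapsed into, the already-present copy of $K$, using that $K$ is a weak Peano space and hence "large" enough to receive such maps, or alternatively one chooses $P$ so large that $Q$ is actually compact after all (possible when $A\cup B$ together with the compact overlap can be engulfed by a compactum, which is where noncompactness of $A,B$ is used to push them to infinity inside $P$'s complement in a controlled way). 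Concretely I would: (i) fix a compatible proper metric and the corresponding exhaustion; (ii) route $N$ to infinity and use it to carve out $P$ so that $X\pois \naibu{X}P$ is compact; (iii) on $P$ run the Lemma \ref{lem:approx}/Theorem \ref{thm:dimremainder} machinery, in its totally bounded incarnation, to get remainder $K$; (iv) check that $A,B$ lie in the compact part and hence have disjoint closures in $\gamma X$ automatically; (v) read off a totally bounded metric from $\gamma X$ via Proposition \ref{prop:separation}. The uniform-equivalence and perfect-map bookkeeping (Fact 1 and Fact 2 in the proof of Lemma \ref{lem:approx}) will need to be redone with "proper metric" replaced by "totally bounded metric," but the structure of those estimates carries over.
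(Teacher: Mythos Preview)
Your high-level framework is exactly right and matches the paper: build a metrizable compactification $\gamma X$ with remainder $K$ in which $A$ and $B$ have disjoint closures, fix any admissible metric on $\gamma X$, restrict to $X$ to get a totally bounded $d$, and invoke Proposition~\ref{prop:separation} to identify $\gamma X$ with $u_dX$. The divergence---and the gap---is in how you propose to build $\gamma X$.

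Your decomposition $X=P\cup Q$ with $N\subset P$, $A\cup B\subset Q$, and $P\cap Q$ compact cannot be arranged so that $Q$ is compact: $A$ and $B$ are both \emph{noncompact} closed subsets of $Q$, so $Q$ is forced to be noncompact. Your step~(iv), ``check that $A,B$ lie in the compact part,'' therefore fails outright, and the alternative you sketch (absorb the $Q$-remainder into $K$) is exactly the hard part and is left unaddressed. The phrase ``noncompactness of $A,B$ is used to push them to infinity inside $P$'s complement'' has it backwards: their noncompactness is the obstruction, not the tool.

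The paper avoids decomposition entirely and uses a graph-closure construction in the spirit of Steiner--Steiner. Pick distinct $p,q\in K$ with disjoint closed neighbourhoods $\cl{K}U_p,\cl{K}U_q$; fix $f:\mathbb R\to K$ with dense image (weak Peano), choose rationals $r_p\in f^{-1}(U_p)$, $r_q\in f^{-1}(U_q)$, and enumerate $\mathbb Q=\{q_n\}$, $N=\{x_n\}$. Define $\varphi$ on the closed set $N\cup A\cup B$ by $\varphi|_A\equiv r_p$, $\varphi|_B\equiv r_q$, $\varphi(x_n)=q_n$, extend by Tietze to $\psi:X\to\mathbb R$, and set $g=f\circ\psi:X\to K$. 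Then $e(x)=(x,g(x))$ embeds $X$ in $\omega X\times K$, and $\gamma X=\cl{\omega X\times K}e(X)$ has remainder $\{p_\infty\}\times K\cong K$ because $g(N)=f(\mathbb Q)$ is dense in $K$. Since $e(A)\subset\omega X\times\cl{K}U_p$ and $e(B)\subset\omega X\times\cl{K}U_q$, the closures of $A$ and $B$ in $\gamma X$ are disjoint. All three roles---producing the full remainder $K$, and sending $A,B$ to separated targets---are handled by the single map $g$, with $N$ doing the density work and the Tietze extension doing the gluing you were trying to do by hand.
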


\begin{proof}
Take points $p,q\in K$ with $p\not=q$.
Let $U_p$ and $U_q$ be neighborhoods
of $p$ and $q$ respectively
such that $\cl{K}U_p\cap\cl{K}U_q=\emptyset$.
Since $K$ is a weak Peano space, there exists a continuous map
$f:\mathbb R\to K$ such that $f(\mathbb R)$ is dense in $K$.
Take points $r_p\in f^{-1}(U_p)\cap \mathbb Q$ and
$r_q\in f^{-1}(U_q)\cap \mathbb Q$. 
Here, enumerate $\mathbb Q$ and $N$ as $\{q_n:n\in\mathbb N\}$
and $\{x_n:n\in\mathbb N\}$, respectively.
Define $\varphi:N\cup A\cup B\to \mathbb Q$ by
\[
\varphi(x)=
\begin{cases}
r_p,                 
\quad\quad &\text{if}\quad x\in A,\\
r_q,  
\quad &\text{if}\quad x\in B,\\
q_n,
\quad &\text{if}\quad x=x_n.
\end{cases}
\]
\noindent
Then there exists a continuous extension 
$\psi:X\to \mathbb R$
such that $\psi|_{N\cup A\cup B}=\varphi$. 
Put $g=f\circ\psi$.
Define $e:X\to \omega X\times K$
by $e(x)=(x,g(x))$
for each $x\in X$.
Clearly, $e$ is an embedding, 
and $\alpha X=\cl{\omega X\times K}e(X)$ is a compactification
of $X$ with $K$ as a remainder. 
Let $d_{\omega X}$ and $d_K$ be metrics compatible with the topology on
$\omega X$ and $K$, respectively.
Here, define a metric $s$ on $\omega X\times K$
by $s((x,u),(y,v))=d_{\omega X}(x,y)+d_K(u,v)$ for each 
$(x,u),~(y,v)\in\omega X\times K$. 
Now, let $d$ be a metric on $X$ induced by $s$, i.e.,
$d(x,y)=s((x,g(x)),(y,g(y)))$ for each $x,y\in X$. 
Clearly, $d$ is totally bounded.
By Proposition \ref{prop:separation}, $\alpha X\approx u_d X$. 
Furthermore, since $\cl{\alpha X}e(A)\cap\cl{\alpha X}e(B)\subset
(\omega X\times\cl{K}U_p)\cap(\omega X\times\cl{K}U_q)=\emptyset$,
$\cl{u_d X}A\cap\cl{u_d X}B=\emptyset$
by \cite[Theorem 3.5.5]{engelking}.
\end{proof}

\medskip

Let $\operatorname{TBM}(X)$ be the set of all compatible
totally bounded metrics on $X$. 
Here, we have the following approximation 
to the Stone-{\v C}ech compactification.

\medskip

\begin{prop}
Let $X$ be a noncompact, locally compact, separable metrizable space.
Then we have the following approximation to
the Stone-{\v C}ech compactification of $X$
for each $n\in \mathbb N$$:$
$$\beta X\approx\sup\{u_{\rho} X : \dim u_{\rho} X\pois X
=\sind u_{\rho} X\pois X=\lind u_{\rho} X\pois X=n
~\mbox{and}~\rho\in\operatorname{TBM}(X)\}.$$
\end{prop}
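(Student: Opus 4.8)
The plan is to realize $\beta X$ as a supremum of a family of Smirnov compactifications, each of which has an $n$-dimensional remainder (in all three senses: $\dim$, $\sind$, $\lind$). Recall that for compactifications one has $\alpha X\approx\sup\{\gamma_i X\}$ precisely when, for every pair $A,B$ of subsets of $X$ whose closures in $\beta X$ are disjoint, there is some index $i$ with $\cl{\gamma_i X}A\cap\cl{\gamma_i X}B=\emptyset$; equivalently, the closed subring of $C^{\ast}(X)$ generated by $\bigcup_i U_{\rho_i}^{\ast}(X)$ is all of $C^{\ast}(X)$. Since any two disjoint closed sets $A,B$ in a noncompact locally compact separable metrizable space $X$ have disjoint closures in $\beta X$, it suffices to produce, for each such pair, a metric $\rho\in\operatorname{TBM}(X)$ with the prescribed $n$-dimensional remainder and with $\cl{u_\rho X}A\cap\cl{u_\rho X}B=\emptyset$. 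But this is essentially the content of Lemma \ref{lem:singular}, once we take $K=\mathbb I^n$ (a nondegenerate metrizable weak Peano space, with $\dim=\sind=\lind=n$).

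**The first step** is the reduction to pairs $A,B$ of disjoint \emph{closed} subsets of $X$: using the one-to-one correspondence between compactifications and closed subrings of $C^{\ast}(X)$ together with \cite[Theorem 3.5.5]{engelking} (or the standard fact that $\beta X$ separates disjoint closed sets and that a supremum of compactifications separates $A$ from $B$ iff some member does), the asserted equality $\beta X\approx\sup\{\dots\}$ holds if and only if for every pair of disjoint closed $A,B\subset X$ some member $u_\rho X$ of the indexed family satisfies $\cl{u_\rho X}A\cap\cl{u_\rho X}B=\emptyset$. **The second step** handles the case where at least one of $A,B$, say $A$, is compact: then $\cl{\beta X}A=A$ is already clopen-separated in \emph{any} compactification whose remainder contains no point of $A$, so choosing any $\rho\in\operatorname{TBM}(X)$ with $u_\rho X\pois X\cong\mathbb I^n$ (which exists by the previous Proposition via Aarts--van Emde Boas) already gives $\cl{u_\rho X}A\cap\cl{u_\rho X}B=\emptyset$; a symmetric remark applies if $B$ is compact.

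**The third and main step** is the case where both $A$ and $B$ are noncompact. Here I invoke Lemma \ref{lem:singular} with $K=\mathbb I^n$: I must first produce a closed copy $N$ of $\mathbb N$ in $X$ with $N\cap(A\cup B)=\emptyset$. Since $X\pois(A\cup B)$ is a nonempty open subset of the noncompact, locally compact, separable metrizable space $X$ — and $X\pois(A\cup B)$ is itself noncompact, for otherwise $A\cup B$ would be a neighborhood of infinity and $A,B$ being closed and disjoint would force one of them compact — we may pick a discrete sequence in $X\pois(A\cup B)$ escaping to infinity, whose range is the required closed copy $N$ of $\mathbb N$. Then Lemma \ref{lem:singular} yields a totally bounded metric $d$ with $u_d X\pois X\cong\mathbb I^n$ and $\cl{u_d X}A\cap\cl{u_d X}B=\emptyset$; since $\dim\mathbb I^n=\sind\mathbb I^n=\lind\mathbb I^n=n$, this $\rho=d$ lies in the indexed family. **The step I expect to be the main obstacle** is the careful verification that $X\pois(A\cup B)$ remains noncompact (equivalently, that a closed copy of $\mathbb N$ disjoint from $A\cup B$ exists) — this is where local compactness, separability, and the noncompactness of \emph{both} $A$ and $B$ must be combined, and it is the only place where the hypotheses are genuinely used beyond what the cited lemmas already provide. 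Once that is in hand, assembling the three cases completes the proof.
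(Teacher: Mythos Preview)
Your third step contains a genuine gap: the claim that one can always find a closed copy of $\mathbb N$ in $X$ disjoint from $A\cup B$ is false. Consider $X=\mathbb R$ with $A=(-\infty,0]$ and $B=[1,\infty)$. Both $A$ and $B$ are noncompact and closed, yet $X\pois(A\cup B)=(0,1)$ has compact closure in $X$, so every infinite subset of $(0,1)$ accumulates in $[0,1]$ and there is no closed copy of $\mathbb N$ in $X$ avoiding $A\cup B$. Your justification (``$A\cup B$ would be a neighborhood of infinity and $A,B$ being closed and disjoint would force one of them compact'') is simply incorrect: in this example $A\cup B$ is a neighborhood of infinity and neither set is compact. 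The place you flagged as ``the main obstacle'' is indeed an obstacle, and it does not go through.

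The paper circumvents this by never asking for $N$ to miss $A\cup B$. Instead it chooses a closed discrete $N=\{x_n\}\subset A$ (which automatically misses $B$), then removes small balls around the odd-indexed and even-indexed points of $N$ from $A$ to form $A_0$ and $A_1$ with $A=A_0\cup A_1$. Now $N_1=\{x_{2n-1}\}$ misses $A_0\cup B$ and $N_0=\{x_{2n}\}$ misses $A_1\cup B$, so Lemma~\ref{lem:singular} applies to each pair $(A_i,B)$ separately, yielding totally bounded metrics $d_0,d_1$ with $u_{d_i}X\pois X\cong\mathbb I^n$ and $\cl{u_{d_i}X}A_i\cap\cl{u_{d_i}X}B=\emptyset$. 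The supremum $\gamma X=\sup\{u_{d_0}X,u_{d_1}X\}$ then separates $A$ from $B$, and since $\gamma X\le\delta X$ (the full supremum in the statement), $\delta X$ separates every pair of disjoint closed sets and hence equals $\beta X$. Note that the paper does \emph{not} claim a single $u_\rho X$ from the family separates $A$ and $B$; it only needs the supremum to do so, which is what the characterization of $\beta X$ requires.
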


\begin{proof} 
Since $X$ is locally compact separable metrizable,
there exists a proper metric $d$ compatible with the topology
on $X$ (cf. \cite[Lemma 3.1]{kt}). Let $A$ and $B$ be disjoint closed subsets of $X$.
We may assume that neither $A$ nor $B$ is compact.
Then we can take a closed copy $N=\{x_n:n\in\mathbb N\}$ of $\mathbb N$
in $A$ such that $\{B_{1/2}(x_n,d):n\in\mathbb N\}$ 
is discrete in X because $A$ is not compact. 
Put $A_0=A\pois\bigcup_{n\in\mathbb N}B_{1/2}(x_{2n-1},d)$
and $A_1=A\pois\bigcup_{n\in\mathbb N}B_{1/2}(x_{2n},d)$.
Since $\{A_0,B\}$ and $\{A_1,B\}$ are two pairs of disjoint
closed subsets of $X$ and both $N_0=\{x_{2n}:n\in\mathbb N\}$ and 
$N_1=\{x_{2n-1}:n\in\mathbb N\}$ 
are closed copies of $\mathbb N$,
we can apply Lemma \ref{lem:singular} to
$\{A_i,B\}$, $N_j$ and $\mathbb I^n$ for $i,j=0,1$ with $i\not=j$. 
Fix an $n\in\mathbb N$.
Then there exists a totally bounded metric $d_i$ on $X$
compatible with the topology on $X$ such that
$u_{d_i}X\pois X\cong \mathbb I^n$ and 
$\cl{u_{d_i}X}A_i\cap\cl{u_{d_i}X}B=\emptyset$ for $i=0,1$.
Then put $\gamma X=\sup\{u_{d_0}X,~u_{d_1}X\}$. 
Since $\cl{u_{d_i}X}A_i\cap\cl{u_{d_i}X}B=\emptyset$ 
for $i=0,1$, $\cl{\gamma X}A\cap\cl{\gamma X}B=\emptyset$. Define
$$\delta X=\sup\{u_{\rho} X : 
u_{\rho} X\pois X\cong \mathbb I^n~\mbox{and}
~\rho\in\operatorname{TBM}(X)\}.$$
\noindent
Since $\delta X\ge \gamma X$, 
$\cl{\delta X}A\cap\cl{\delta X}B=\emptyset$ for
each disjoint closed subsets $A,B$ of $X$. 
This is a characterization of the Stone-{\v C}ech 
compactification of the normal spaces, and then 
$\delta X\approx\beta X$, as claimed.
\end{proof}

{\small }

\end{document}